\newtheorem{teore}{Theorem}[section]
\newtheorem{obs}[teore]{Remark}%[section]
\newtheorem{defi}{Definition}[section]
\newtheorem{coro}[teore]{Corollary}%[section]
\newtheorem{lem}[teore]{Lemma}%[section]
\newcommand{\ka}{\kappa}
\newcommand{\R}{\mathbb{R}}
\newcommand{\C}{\mathbb{C}}
\numberwithin{equation}{section}
\title[On a quadratic NLS system in dimension $n=5$]{On the dynamics of a quadratic Schr\"odinger system in dimension $n=5$}
\author[N. Noguera]{Norman Noguera}
\address{IMECC-UNICAMP, Rua S\'ergio Buarque de Holanda, 651, 13083-859, Cam\-pi\-nas-SP, Bra\-zil}
\email{nnoguera57@gmail.com}
\author[A. Pastor]{Ademir Pastor}
\address{IMECC-UNICAMP, Rua S\'ergio Buarque de Holanda, 651, 13083-859, Cam\-pi\-nas-SP, Bra\-zil}
\email{apastor@ime.unicamp.br}
\date{}
\subjclass[2010]{35Q55,  35B44, 35J47, 35A01}
\keywords{Global well-posedness; Blow up; Ground states solutions}
\begin{document}

\maketitle	

%\linenumbers
\begin{abstract}
In this work we give a sharp criterion for the global well-posedness, in the energy space, for a system of nonlinear Schr\"{o}dinger equations with quadratic interaction in dimension $n = 5$.  The criterion is given in terms of the charge and energy of the ground states associated with the system, which  are obtained by minimizing a Weinstein-type functional. The main result is then obtained in view of a sharp Gagliardo-Nirenberg-type inequality.
\end{abstract}

\section{Introduction}
In this notes we are interested in the following initial-value problem
\begin{equation}\label{system1}
\begin{cases}
\displaystyle i\partial_{t}u+\frac{1}{2m}\Delta u=\lambda\overline{u}v\\\\
\displaystyle i\partial_{t}v+\frac{1}{2M}\Delta v=\mu u^{2},\\\\
(u(x,0),v(x,0))=(u_0,v_0).
\end{cases}
\end{equation}
where $u,v:\R^n\times \R\to \C$, $(x,t)\in \R^n\times \R$, $\Delta$ is the Laplacian operator, $m,M>0$ are real constants and $\lambda,\mu \in \C$. System \eqref{system1} can be regarded as a non-relativistic limit of a Klein-Gordon system under the so called mass resonance condition $M=2m$. Also,  similar systems appear as models for the interaction of waves propagating in a $\chi^{(2)}$ dispersive media (see, for instance, \cite{colin}). From the mathematical point of view, the study of nonlinear  Schr\"{o}dinger systems with quadratic interaction has been increasing in recent years.  To cite a few, we refer the reader to \cite{colin}, \cite{corcho}, \cite{Hayashi3}, \cite{Hayashi4},  \cite{Hayashi}, \cite{hoshi}, \cite{Li}, \cite{LiHa}, \cite{oza}, and references therein.

An almost complete study of system \eqref{system1} in $L^2(\R^n)$ and $H^1(\R^n)$ was undertaken in \cite{Hayashi} (see also \cite{Hayashi3}, \cite{Hayashi4}). First of all, one should note that $L^2(\R^n)$ and $H^1(\R^n)$ are critical spaces (in the sense of scaling) for $n=4$ and $n=6$. In particular, it has been shown the local well-posedness in $L^2(\R^n)$, $1\leq n\leq4$ and in $H^1(\R^n)$, $1\leq n\leq6$. The method used to prove these results was the contraction argument combined with the well known Strichartz estimates (see, for instance, \cite{Cazenave} or \cite{Linares}). Under the assumption that there exists $c\in\R\backslash\{0\}$ such that $\lambda=c\overline{\mu}$ the global well-posedness in $L^2(\R^n)$, $1\leq n\leq3$ and in $H^1(\R^n)$, $1\leq n\leq3$, were also established. This condition on the parameters is necessary in order to obtain the conservation of the charge and the energy, which in turn imply a priori estimates. 
Since $L^2(\R^4)$ is a critical space, the global existence for $n=4$ requires an additional assumption on the initial data.  To be more precise on the results,  here and throughout the paper we assume:
\begin{equation}\label{conditionlambmu}
\mbox{There exist a constant $c\in \R\backslash\{0\}$ such that $\lambda=c\overline{\mu}$.} 
\end{equation}
By introducing the change of variables
 \begin{equation}\label{chagevariable}
\tilde{u}(x,t)=\sqrt{\frac{c}{2}}|\mu|u\left(\sqrt{\frac{1}{2m}}x,t\right),\;\;\;\;\;\;\;\tilde{v}(x,t)=-\frac{\lambda}{2}v\left(\sqrt{\frac{1}{2m}}x,t\right),
\end{equation}
we deduce that, after dropping the tildes,  \eqref{system1} is equivalent to
\begin{equation}\label{system3}
\begin{cases}
\displaystyle i\partial_{t}u+\Delta u=-2v\overline{u},\\
\displaystyle i\partial_{t}v+\kappa\Delta v=- u^{2},\\
(u(x,0),v(x,0))=(u_0,v_0),
\end{cases}
\end{equation}
with $\kappa=m/M$. So in what follows we will be concerned with system \eqref{system3} instead of \eqref{system1}.

It is not difficult to see that \eqref{system3} conserves the charge and the energy, which are given, respectively, by
\begin{equation}\label{conservationcharge1}
Q(u(t),v(t))=\|u(t)\|_{L^2}^{2}+2\|v(t)\|_{L^2}^{2}
\end{equation}
and
\begin{equation}\label{conservationenergia}
E(u(t),v(t))=\|\nabla u(t)\|_{L^2}^2+\kappa\|\nabla v(t)\|_{L^2}^2-2\;\mbox{Re} \big(v(t),u^{2}(t)\big)_{L^2}.
\end{equation}

With this terminology in hand it was established in \cite{Hayashi} that if $Q(u_0,v_0)<Q(\phi_{0},\psi_{0})$, where $(\phi_{0},\psi_{0})$ is  any \textit{ground state} associated with \eqref{system3} (see Section \ref{section2} for details) then the corresponding solution is global in $H^1(\R^4)$. This result is similar to the classical one established by Weinstein \cite{Weinstein} for the mass-critical NLS equation. To prove this result it was necessary to show a vectorial Gagliardo-Nirenberg-type inequality. In addition, under the mass resonance condition $\ka=1/2$ is was shown that this result is sharp in the sense that there exists a particular initial data $(u_0^*,v_0^*)$ such that $Q(u_0^*,v_0^*)=Q(\phi_{0},\psi_{0})$ but the corresponding local solution blows up in finite time. It is to be observed that
the existence of ground state solutions was established in any dimension $1\leq n\leq5$.
The used methods  were based on the concentration-compactness argument in dimension $n=1$ and on the Strauss' compactness lemma in dimension $2\leq n\leq5$ (see \cite{Strauss}).

We point out that, still under the condition $\ka=1/2$, if $E(u_0,v_0)<0$ (or $E(u_0,v_0)=0$ and $(u_0,v_0)$ has negative momentum) then the local solution also blows up in finite time in dimension $4\leq n\leq 6$ (see Theorem 3.12 in \cite{Hayashi}).

At this point we observe that in \cite{Hayashi} nothing was said concerning the global well-posedness in $H^1(\R^5)$. To the best of our knowledge there are no results in this direction in the current literature. 
 So, our main purpose in this notes is to obtain a sufficient sharp  condition for global well-posedness in $H^1(\R^5)$; this is the $L^2$ supercritical and $H^1$   subcritical case.

Before stating our results, let us introduce the functional
\begin{equation}\label{FK}
K(u,v)=\|\nabla u\|_{L^2}^{2}+\kappa\|\nabla v\|_{L^2}^{2}.
\end{equation}
Our main theorems reads as follows.

\begin{teore}[Sufficient condition for global existence]\label{thm:globalexistencecondn=5}
	Assume $(u_{0},v_{0})\in H^{1}(\R^5)\times H^{1}(\R^5)$ and let $(u(t),v(t))$ be the corresponding solution of \eqref{system3}, defined in the maximal interval of existence, say, $I$. Assume that  
	\begin{equation}\label{conditionsharp1}
	E(u_{0},v_{0})Q(u_{0},v_{0})<E(\phi,\psi)Q(\phi,\psi).
	\end{equation}
		If 
	\begin{equation}\label{conditionsharp2}
	K(u_{0},v_{0})Q(u_{0},v_{0})<K(\phi,\psi)Q(\phi,\psi),
	\end{equation}
	then
	\begin{equation*}
	K(u(t),v(t))Q(u_{0},v_{0})<K(\phi,\psi)Q(\phi,\psi),\;\;\;\;\;\forall t\in I.
	\end{equation*}
	In particular the initial-value problem (\ref{system3}) is globally well-posed in $H^{1}(\R^5)\times H^{1}(\R^5)$.
\end{teore}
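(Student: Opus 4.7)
The plan is to carry out a bootstrap argument on the product $K(u(t),v(t))Q(u(t),v(t))$ driven by a sharp vectorial Gagliardo--Nirenberg-type inequality. First I would establish
\[
|P(u,v)|\ \leq\ C_{\mathrm{opt}}\,K(u,v)^{5/4}\,Q(u,v)^{1/4},\qquad P(u,v):=2\,\mathrm{Re}\,(v,u^{2})_{L^{2}},
\]
in which the exponents $5/4$ and $1/4$ are the unique ones compatible with the two scaling invariances of \eqref{system3} in dimension $n=5$. The optimal constant $C_{\mathrm{opt}}$ is identified as the infimum of the Weinstein-type quotient $W(u,v):=K(u,v)^{5/4}Q(u,v)^{1/4}/|P(u,v)|$, and by the ground-state construction recalled in Section~\ref{section2} this infimum is attained at $(\phi,\psi)$, giving $|P(\phi,\psi)|=C_{\mathrm{opt}}K(\phi,\psi)^{5/4}Q(\phi,\psi)^{1/4}$.

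Combining this inequality with $E=K-P$ and multiplying through by $Q$ produces the pointwise bound
\[
E(u,v)\,Q(u,v)\ \geq\ f\bigl(K(u,v)\,Q(u,v)\bigr),\qquad f(y):=y-C_{\mathrm{opt}}\,y^{5/4},
\]
which becomes an equality at $(\phi,\psi)$; in particular $E(\phi,\psi)Q(\phi,\psi)=f(y_{0})$, where $y_{0}:=K(\phi,\psi)Q(\phi,\psi)$. The function $f$ is smooth and strictly concave on $(0,\infty)$, so it admits a unique global maximum and is strictly monotonic on either side of it.

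Now I execute the bootstrap. Write $c:=E(u_{0},v_{0})Q(u_{0},v_{0})$. Conservation of charge and energy gives $E(u(t),v(t))Q(u(t),v(t))=c$, whence
\[
f\bigl(K(u(t),v(t))\,Q(u(t),v(t))\bigr)\ \leq\ c\ <\ f(y_{0}),\qquad \forall\,t\in I,
\]
the strict upper bound being hypothesis \eqref{conditionsharp1}. Hypothesis \eqref{conditionsharp2} gives the initial position $K(u_{0},v_{0})Q(u_{0},v_{0})<y_{0}$. Since $t\mapsto K(u(t),v(t))Q(u(t),v(t))$ is continuous on $I$, were it to reach the threshold $y_{0}$ at some $t^{*}\in I$, evaluating the previous display at $t^{*}$ would yield $f(y_{0})\leq c$, contradicting \eqref{conditionsharp1}. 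Hence $K(u(t),v(t))Q(u(t),v(t))<y_{0}$ throughout $I$, which, using $Q(u(t),v(t))=Q(u_{0},v_{0})$ once more, is exactly the announced bound. In particular $\|\nabla u(t)\|_{L^{2}}^{2}+\ka\|\nabla v(t)\|_{L^{2}}^{2}$ is uniformly controlled on $I$; combined with the conservation of the charge, this gives a uniform $H^{1}\times H^{1}$ bound, and the blow-up alternative from the local theory in \cite{Hayashi} then forces $I=\R$.

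The delicate step is the first one: producing the sharp Gagliardo--Nirenberg inequality with exactly the normalization that ties $C_{\mathrm{opt}}$ to the ground states of Section~\ref{section2}. Existence of a minimizer for $W$ can be obtained along the lines used in \cite{Hayashi} for $n\leq 5$ (concentration--compactness together with Strauss' compactness lemma), and one must then verify that any minimizer solves, after a suitable rescaling, the same Euler--Lagrange system as the ground state, so that the two agree and the number $y_{0}$ is unambiguously defined. The subsequent bootstrap is soft and mirrors the classical Weinstein-type argument.
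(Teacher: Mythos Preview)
Your proposal is correct and follows essentially the same route as the paper: the sharp vectorial Gagliardo--Nirenberg inequality of Corollary~\ref{corollarybestconstant} combined with the conservation of $E$ and $Q$ yields the pointwise constraint $f(K(u(t),v(t))Q(u_{0},v_{0}))\leq E(u_{0},v_{0})Q(u_{0},v_{0})$, and a continuity (bootstrap) argument then traps $K(u(t),v(t))Q(u_{0},v_{0})$ below the threshold $K(\phi,\psi)Q(\phi,\psi)$. The only cosmetic difference is that the paper packages the continuity step as the abstract Lemma~\ref{supercritcalcase} (applied to $G(t)=K(u(t),v(t))$ via Lemma~\ref{lemmauxiliar}) rather than arguing directly with the product $K\cdot Q$ as you do.
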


Here,  $(\phi,\psi)$ is any ground state solution for (\ref{system4}) with $\omega =1$ (see Section \ref{section2}).
Under the mass resonance condition $\ka=1/2$,  next theorem shows that \eqref{conditionsharp2} is a sharp condition for the global existence.

\begin{teore}[Existence of blow-up solutions]\label{thm:sharpglobalexistencecondn=5} Let  $(\phi,\psi)$ be a ground state solution for (\ref{system4}) with $\omega=1$. Assume  $\kappa=1/2$ and $(u_{0},v_{0})\in H^{1}(\R^5)\times H^{1}(\R^5)$ and let $I$ be the maximal time interval of existence  of the solution $(u(t),v(t))$. Suppose
	\begin{equation}\label{energycondblowup}
	E(u_{0},v_{0})Q(u_{0},v_{0})<E(\phi,\psi)Q(\phi,\psi),
	\end{equation}
		and
	\begin{equation}\label{gradientcondblowup}
	K(u_{0},v_{0})Q(u_{0},v_{0})>K(\phi,\psi)Q(\phi,\psi).
	\end{equation}
If $(xu_{0},xv_{0})\in L^{2}(\R^5)\times L^{2}(\R^5)$ or $u_0,v_0$ are radial functions, then  $I$ is finite, which means to say that $(u(t),v(t))$ blows up in finite time.
\end{teore}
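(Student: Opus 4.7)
The plan is to combine a trapping estimate from the sharp vectorial Gagliardo-Nirenberg inequality with a Glassey-type virial argument, in the spirit of the classical mass-supercritical NLS blow-up theorems. Let $\Lambda_* := K(\phi,\psi)Q(\phi,\psi)$. The sharp vectorial Gagliardo-Nirenberg inequality, attained at any ground state and derived from the minimization problem of Section \ref{section2}, provides an estimate of the form
$$2\,\mathrm{Re}(v, u^{2})_{L^{2}} \;\leq\; C_{\mathrm{opt}}\, K(u,v)^{\gamma_{1}}Q(u,v)^{\gamma_{2}},$$
with exponents $\gamma_{1}, \gamma_{2}$ fixed by $L^{2}$-supercritical scaling in $n=5$. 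Multiplying by $Q(u_{0},v_{0})$ and using energy conservation together with \eqref{energycondblowup}, a continuity argument in $y(t) := K(u(t),v(t))Q(u_{0},v_{0})/\Lambda_*$ shows that the open set $\{y>1\}$ is invariant under the flow and that in fact $y(t) \geq 1+\delta$ for some $\delta>0$ depending only on the gap in \eqref{energycondblowup}. This is the symmetric analog of the trapping used to prove Theorem \ref{thm:globalexistencecondn=5}, on the opposite side of the ground state threshold.

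\textbf{Step 2: virial identity, finite-variance case.} When $(xu_{0},xv_{0})\in L^{2}\times L^{2}$, set
$$V(t) := \int_{\R^{5}}|x|^{2}\bigl(|u(x,t)|^{2}+2|v(x,t)|^{2}\bigr)\,dx.$$
Under the mass resonance $\kappa=1/2$ the two Schr\"odinger operators in \eqref{system3} have compatible dispersions, which is precisely what allows the cross-terms in the computation of $V''(t)$ to close into a Glassey-type identity. A standard calculation --- multiplying each equation by the appropriate weighted conjugate, integrating by parts, and using conservation of $E$ to eliminate the nonlinear coupling --- turns $V''(t)$ into an affine function of $K(u(t),v(t))$ whose free term is controlled by $E(u_{0},v_{0})$ and $\Lambda_*/Q(u_{0},v_{0})$. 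Combining with Step 1 and \eqref{energycondblowup} gives $V''(t) \leq -\eta$ uniformly in $t\in I$, with $\eta>0$ depending only on the initial data. Integrating twice forces $V(t)<0$ in finite time, contradicting $V\geq 0$, so $I$ must be finite.

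\textbf{Step 3: radial case and main obstacle.} Without finite variance I would follow the Ogawa-Tsutsumi localization, replacing $|x|^{2}$ by a smooth radial cutoff $\varphi_R$ equal to $|x|^{2}$ on $|x|\leq R$ and constant on $|x|\geq 2R$, and setting $V_R(t) := \int \varphi_R\bigl(|u|^{2}+2|v|^{2}\bigr)\,dx$. The same differentiation yields the leading term from Step 2 plus error terms supported in $|x|\geq R$; the radial Sobolev embedding in $\R^{5}$, $\|f\|_{L^{\infty}(|x|\geq R)} \lesssim R^{-2}\|f\|_{H^{1}}$, combined with the uniform $H^{1}$-bound coming from conservation of $Q$ and Step 1, renders those errors $o_{R}(1)$ uniformly in $t$, so picking $R$ large recovers $V_R''(t)\leq -\eta/2$ and the convexity argument closes. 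I expect the main technical obstacle to be pinning down the algebraic form of the virial identity under $\kappa=1/2$: the bound $V''(t)\leq -\eta$ relies on a clean cancellation between the nonlinear contribution to $V''(t)$ and the Pohozaev identity for $(\phi,\psi)$, which is what produces a bound governed solely by the scale-invariant gaps $E(u_{0},v_{0})Q(u_{0},v_{0}) - E(\phi,\psi)Q(\phi,\psi)$ and $K(u(t),v(t))Q(u_{0},v_{0}) - \Lambda_*$. The mass resonance is essential exactly here: outside of it, uncontrolled cross terms appear and closure of the virial identity fails.
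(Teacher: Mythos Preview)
Your Steps 1 and 2 match the paper's argument: the trapping $K(u(t),v(t))Q(u_{0},v_{0})>(1+\delta_{2})K(\phi,\psi)Q(\phi,\psi)$ is obtained exactly as you outline (the paper packages it via Corollary~\ref{corosupercritcalcase}), and the finite-variance virial identity \eqref{virialidentity2} with $n=5$ gives $V''(t)=10E(u_{0},v_{0})-2K(u(t),v(t))$, from which $V''(t)\leq -B<0$ follows after multiplying by $Q(u_{0},v_{0})$ and using $E(\phi,\psi)=K(\phi,\psi)/5$.

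Step 3, however, contains a real gap. You invoke a ``uniform $H^{1}$-bound coming from conservation of $Q$ and Step 1'' to make the localization errors $o_{R}(1)$ uniformly in $t$. But Step 1 yields only a \emph{lower} bound on $K(u(t),v(t))$; in the blow-up regime one expects $K(u(t),v(t))\to\infty$, so no uniform $H^{1}$ control is available, and the exterior error $\int_{|x|\geq R}|v||u|^{2}\,dx$ cannot be made small uniformly in $t$ by your mechanism. The paper handles this differently: after applying the truncated radial Gagliardo--Nirenberg estimate (Lemma~\ref{StraussLemaconsequence}) and Young's inequality with parameter $\epsilon$, the exterior error splits as
\[
C_{1}'\int_{|x|\geq R}|v||u|^{2}\,dx\;\leq\;\frac{\tilde C_{\epsilon}}{R^{8/3}}\,Q(u_{0},v_{0})^{5/3}\;+\;\epsilon\,K(u(t),v(t)).
\]
The point is that the $\epsilon K$ piece is not small in $t$; it is \emph{absorbed} into the main term, producing $-(1-\epsilon)K(u(t),v(t))$ in the estimate for $V''(t)$. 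Only after this absorption, and after multiplying by $Q(u_{0},v_{0})$ and invoking the trapping lower bound, does one choose $\epsilon$ small (so that $-\delta_{1}-\delta_{2}+\epsilon(1+\delta_{2})<0$) and then $R$ large to kill the remaining $Q$-controlled errors. Your sketch misses this absorption step, and as written the argument would not close.
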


Theorems \ref{thm:globalexistencecondn=5} and \ref{thm:sharpglobalexistencecondn=5} are inspired in \cite{Holmer1}, where similar results were obtained for the classical Schr\"odinger equation.

This paper is organized as follows. In section \ref{section2} we state preliminary results concerned with the  ground state solutions. In particular, their  existence is shown via minimization of Weinstein's functional. We also establish a sharp Gagliardo-Nirenberg inequality.  In section \ref{section3}, we prove Theorems \ref{thm:globalexistencecondn=5} and \ref{thm:sharpglobalexistencecondn=5}.

\section{ground states and their properties}\label{section2}

In this section we will introduce the main tools to prove  Theorems \ref{thm:globalexistencecondn=5} and \ref{thm:sharpglobalexistencecondn=5}. In general, we use the standard notation in the theory of partial differential equations.

\subsection{Preliminary results}
Let us start by introducing the notion of ground states. First of all, we recall that a standing wave solution for \eqref{system3} is a solution of the form
$$
\left({u},{v}\right)=\left(e^{i\omega t}\phi(x),e^{2i\omega t} \psi(x)\right),
$$
where $\omega>0$ is a real parameter and $\phi,\psi$ are real-valued functions, which may depend on $\omega$, with a suitable decay at infinity.
By replacing this ansatz in \eqref{system3}, we face the elliptic system
\begin{equation}\label{system4}
\begin{cases}
\displaystyle -\Delta\phi+\omega \phi =2 \phi\psi,\\
\displaystyle -\kappa\Delta\psi+2\omega \psi= \phi^{2},
\end{cases}
\end{equation}
where as before $\ka=m/M$. In what follows, $\ka$ will be thought as any positive real constant.

\begin{defi}
A pair $(\phi,\psi)\in H^1(\R^n)\times H^1(\R^n)$ is called a solution (or a \textit{weak solution}) of \eqref{system4} if
$$
\int \left(\nabla \phi\cdot \nabla f +\omega \phi f\right)dx=2\int \phi\psi fdx,
$$
$$
\int \left(\ka\nabla \psi\cdot \nabla g +2\omega \psi g\right)dx=\int \phi^2 gdx,
$$
for any $f,g\in C_0^{\infty}(\R^n)$.
\end{defi}

As usual, from the standard elliptic regularity theory (see, for instance, \cite{Cazenave} or \cite{Evans}), weak solutions  are indeed smooth and  satisfy \eqref{system4} in the usual sense. The so called ground state solutions are usually obtained as minimizers of some functional connecting its critical points with the solutions of \eqref{system4}. The functional of interest here is defined by
\begin{equation*}
\begin{split}
I_{\omega}(\phi,\psi)&=\frac{1}{2}\left(\|\nabla \phi\|_{L^2}^{2}+\kappa\|\nabla \psi\|_{L^2}^{2}\right)+\frac{\omega}{2}\left(\| \phi\|_{L^2}^{2}+2\| \psi\|_{L^2}^{2}\right)-\int\phi^2\psi dx\\
&\equiv \frac{1}{2}E(\phi,\psi)+\frac{\omega}{2}Q(\phi,\psi).
\end{split}
\end{equation*}
In particular, it is easily seen that  $(\phi,\psi)$ is a solution of \eqref{system4} if and only if it is a critical point of $I_\omega$. As we said, among all solutions of \eqref{system3} stand out the so called \textit{ground states} we shall  define next.

\begin{defi}
A solution $(\phi_0,\psi_0)\in H^1(\R^n)\times H^1(\R^n)$ of \eqref{system4} is called a ground state if
$$
I_\omega(\phi_0,\psi_0)=\inf\left\{I_{\omega}(\phi,\psi); \; (\phi,\psi)\in \mathcal{C}_{\omega} \right\},
$$
where 
$$
\mathcal{C}_{\omega}:=\left\{ (\phi,\psi)\in H^1(\R^n)\times H^1(\R^n); \; (\phi,\psi)\; \mbox{is a nontrivial critical point of $I_\omega$} \right\}.
$$
The set of ground states for \eqref{system4} will be denoted by $\mathcal{G}_{\omega}$.
\end{defi}

The existence of ground states for \eqref{system4}, in dimension $1\leq n\leq5$, was already obtained in \cite{Hayashi} by minimizing the functional
$$
R_\omega(\phi,\psi)=\frac{K(\phi,\psi)+\omega Q(\phi,\psi)}{P(\phi,\psi)^{2/3}}
$$
on $\mathcal{P}$, where
	\begin{equation}
P(\phi,\psi):=\int\phi^2{\psi}\; dx\label{FP}
\end{equation}
and 
	$$
\mathcal{P}:=\left\{(\phi,\psi)\in H^{1}(\R^n)\times H^{1}(\R^n)\backslash\{(0,0)\};\int\phi^2\psi\; dx>0\right\}.
$$
 However, here we will present a slight different proof  based only on Strauss' compactness lemma. Indeed, in \cite{Hayashi} the authors used the compactness of the embedding $H^1_r(\R^n)\hookrightarrow L^3(\R^n)$, $2\leq n\leq5$ to obtain that any minimizing sequence converges to a ground state (up to scaling). Here, $H^1_r$ denotes the space of radially symmetric $H^1$ functions. Due to the lack of the above mentioned compactness in dimension $n=1$, they employed a concentration-compactness argument on the functional $I_\omega=\frac{1}{2}(K+\omega Q)-P$ to obtain the ground states in this case.

 In our approach, the ground states will be obtained as minimizers of a Weinstein-type functional. The advantage in using this approach is that, as a byproduct, it also yields  the best constant in a vectorial Gagliardo-Nirenberg inequality (see Corollary \ref{corollarybestconstant} below). This method was used in \cite{Hayashi} only for $n=4$. 
 
 Although we are mainly concerned with dimension $n=5$, we prove the existence of ground states for $1\leq n\leq5$, because it does not demand extra efforts. In fact, instead of using two different approaches for $n=1$ and $2\leq n\leq 5$, we use the compactness of the embedding $H^1_{rd}(\R^n)\hookrightarrow L^3(\R^n)$, which holds in any dimension $1\leq n\leq 5$ (see, for instance, \cite[Proposition 1.7.1]{Cazenave}). In particular, this simplifies the proof of the existence of ground states in dimension $n=1$. Here  $H^1_{rd}$ denotes the space of radially symmetric and nonincreasing $H^1$ functions.

We start with the following properties.

\begin{lem}\label{identitiesfunctionals}
	Let $(\phi,\psi)$ be a  solution of (\ref{system4}). Then,
	\begin{equation}
	P(\phi,\psi)=2I_{\omega}(\phi,\psi),\label{b}
	\end{equation}\begin{equation}
	K(\phi,\psi)=nI_{\omega}(\phi,\psi),\label{d}
	\end{equation}
	\begin{equation}
\omega	Q(\phi,\psi)=(6-n)I_{\omega}(\phi,\psi).\label{e}
	\end{equation}
\end{lem}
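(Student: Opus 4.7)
The three identities are classical Pohozaev--Nehari-type relations for the coupled elliptic system \eqref{system4}. The plan is to derive two independent linear relations among $K(\phi,\psi)$, $\omega Q(\phi,\psi)$ and $P(\phi,\psi)$, and then solve for each of these quantities in terms of $I_{\omega}(\phi,\psi)$.

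For the first (Nehari-type) relation, I would test the first equation of \eqref{system4} against $\phi$ and the second against $\psi$, integrate over $\R^{n}$, and integrate by parts. This gives
\[
\|\nabla\phi\|_{L^{2}}^{2}+\omega\|\phi\|_{L^{2}}^{2}=2P(\phi,\psi), \qquad \kappa\|\nabla\psi\|_{L^{2}}^{2}+2\omega\|\psi\|_{L^{2}}^{2}=P(\phi,\psi).
\]
Adding these yields $K(\phi,\psi)+\omega Q(\phi,\psi)=3P(\phi,\psi)$, which combined with the definition $I_{\omega}=\tfrac{1}{2}(K+\omega Q)-P$ immediately produces $I_{\omega}=\tfrac{1}{2}P$, i.e.\ \eqref{b}.

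For the second (Pohozaev-type) relation, I would multiply the first equation by $x\cdot\nabla\phi$ and the second by $x\cdot\nabla\psi$, integrate over $\R^{n}$, and use the well-known identities
\[
\int(-\Delta\phi)(x\cdot\nabla\phi)\,dx=\tfrac{2-n}{2}\|\nabla\phi\|_{L^{2}}^{2}, \qquad \int\phi(x\cdot\nabla\phi)\,dx=-\tfrac{n}{2}\|\phi\|_{L^{2}}^{2},
\]
and analogously for $\psi$. Rewriting $\int 2\phi\psi(x\cdot\nabla\phi)\,dx=\int\psi\,(x\cdot\nabla\phi^{2})\,dx$ and integrating by parts produces, besides a factor $-nP$, a cross term $-\int\phi^{2}(x\cdot\nabla\psi)\,dx$ which is exactly cancelled by the right-hand side of the Pohozaev computation for the second equation. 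Summing the two identities eliminates the cross terms and leaves
\[
(n-2)K(\phi,\psi)+n\omega Q(\phi,\psi)=2nP(\phi,\psi).
\]

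Solving the linear $2\times 2$ system formed by these two relations for $K$ and $\omega Q$ in terms of $P$ gives $K=\tfrac{n}{2}P$ and $\omega Q=\tfrac{6-n}{2}P$; substituting $P=2I_{\omega}$ from the first step then yields \eqref{d} and \eqref{e}. The only real technical point in the argument is the justification of the integrations by parts in the Pohozaev step, where one needs enough regularity and decay of $(\phi,\psi)$ to discard boundary terms at infinity. This is standard: as noted in the paper, weak $H^{1}$ solutions of \eqref{system4} are in fact smooth classical solutions by elliptic regularity, and since $\omega>0$ both components decay exponentially, so all boundary contributions vanish and all integrations by parts are legitimate.
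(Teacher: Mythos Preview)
Your proof is correct. The paper itself does not give any argument here: its entire proof consists of the single line ``See Theorem 4.1 in \cite{Hayashi}.'' What you wrote is exactly the standard derivation that lies behind that reference, namely the Nehari identity (testing the two equations against $\phi$ and $\psi$) together with the Pohozaev identity (testing against $x\cdot\nabla\phi$ and $x\cdot\nabla\psi$), and then solving the resulting $2\times2$ linear system. Your computations are accurate, including the cancellation of the cross term $\int\phi^{2}(x\cdot\nabla\psi)\,dx$ between the two Pohozaev identities, and your remark on regularity and decay (smoothness by elliptic regularity and exponential decay since $\omega>0$) is the right way to justify the integrations by parts. In short, you have supplied the proof that the paper outsources, by the expected method.
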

\begin{proof}
	See Theorem 4.1 in \cite{Hayashi}.	
\end{proof}

\begin{obs}\label{obsnorm}
In view of Lemma \ref{identitiesfunctionals} some simple remarks are in order:
	\begin{enumerate}
		\item From (\ref{e}) we  conclude that  a  solution $({\phi},{\psi})$ of (\ref{system4}) is a ground state if  and only if  the charge $Q({\phi},{\psi})$ is minimal.
		\item Ground states do not exist if $(6-n)\omega\leq0$. So, since we are assuming $\omega>0$, ground states do not exist if $n\geq6$.
		\item If $({\phi},{\psi})$ is a solution of (\ref{system4}) then (\ref{b}) and (\ref{d}) imply that $P(\phi,\psi)>0$; this means that $\mathcal{C}_{\omega}\subset \mathcal{P}$.
	\end{enumerate}
\end{obs}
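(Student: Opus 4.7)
The plan is to derive all three assertions as direct consequences of the Nehari/Pohozaev-type identities collected in Lemma \ref{identitiesfunctionals}. Before tackling the items individually, I would record a preliminary observation that will be used throughout: for any nontrivial solution $(\phi,\psi)\in H^1(\R^n)\times H^1(\R^n)$ of \eqref{system4}, one has $K(\phi,\psi)>0$. Indeed, $K(\phi,\psi)=0$ would force $\phi$ and $\psi$ to be constant, and the only constant function in $H^1(\R^n)$ is the zero function, contradicting nontriviality. Combined with identity \eqref{d}, this gives $I_\omega(\phi,\psi)=K(\phi,\psi)/n>0$ for every nontrivial solution.

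For item (1), I would use identity \eqref{e} to write
\begin{equation*}
I_\omega(\phi,\psi)=\frac{\omega}{6-n}\,Q(\phi,\psi)\qquad\text{for every }(\phi,\psi)\in\mathcal{C}_\omega.
\end{equation*}
Since $1\le n\le 5$ and $\omega>0$, the factor $\omega/(6-n)$ is a strictly positive constant that does not depend on $(\phi,\psi)$. Therefore minimizing $I_\omega$ over $\mathcal{C}_\omega$ is equivalent to minimizing $Q$ over $\mathcal{C}_\omega$, which is exactly the claimed characterization of ground states by minimality of the charge.

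For item (2), I would apply \eqref{e} once more, now in the regime $(6-n)\omega\le 0$. Using $I_\omega(\phi,\psi)\ge 0$ (from the preliminary observation, or directly from \eqref{d} and $K\ge 0$), we obtain
\begin{equation*}
\omega\, Q(\phi,\psi)=(6-n)\,I_\omega(\phi,\psi)\le 0.
\end{equation*}
Since $\omega>0$ and $Q(\phi,\psi)=\|\phi\|_{L^2}^2+2\|\psi\|_{L^2}^2\ge 0$ by definition, this forces $Q(\phi,\psi)=0$, hence $(\phi,\psi)=(0,0)$. Thus no nontrivial solution, and in particular no ground state, can exist; taking $\omega>0$ gives the stated obstruction in dimension $n\ge 6$.

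For item (3), I would combine \eqref{b} and \eqref{d} to get $P(\phi,\psi)=2I_\omega(\phi,\psi)=\tfrac{2}{n}K(\phi,\psi)$, and invoke the preliminary observation $K(\phi,\psi)>0$ for any nontrivial $H^1$ solution. This yields $P(\phi,\psi)>0$, so $(\phi,\psi)\in\mathcal{P}$, proving $\mathcal{C}_\omega\subset\mathcal{P}$. None of the steps presents a genuine obstacle; the only point that requires a brief justification is the strict positivity of $K$ on nontrivial $H^1$ solutions, which is the small bookkeeping step underlying all three items.
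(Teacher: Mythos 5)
Your proposal is correct and follows exactly the route the paper intends: the remark is stated as an immediate consequence of the identities \eqref{b}, \eqref{d}, \eqref{e} of Lemma \ref{identitiesfunctionals}, and your argument simply writes out those deductions, with the strict positivity of $K$ (hence of $I_\omega$) on nontrivial $H^1$ solutions as the only detail needing mention. No gaps.
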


Next we introduce the Weinstein functional
\begin{equation}\label{functionalJgeral2}
J(\phi,\psi)=\frac{\left(\| \phi\|_{L^2}^{2}+2\| \psi\|_{L^2}^{2}\right)^{\frac{3}{2}-\frac{n}{4}}\left(\|\nabla \phi\|_{L^2}^{2}+\kappa\|\nabla \psi\|_{L^2}^{2}\right)^{\frac{n}{4}}}{\int\phi^2\psi\; dx}\equiv 
\frac{Q(\phi,\psi)^{\frac{3}{2}-\frac{n}{4}}K(\phi,\psi)^{\frac{n}{4}}}{P(\phi,\psi)}.
\end{equation}
Note that if $(\phi,\psi)$ is a  solution of \eqref{system4} then, in view of Lemma \ref{identitiesfunctionals} and Remark \ref{obsnorm}, $J(\phi,\psi)$ is well-defined. 
In the following, we are going to use Lemma \ref{identitiesfunctionals} in order to get a relation between functionals $J$ and $I_{\omega}$. More precisely, we have.

\begin{lem}\label{lemma3}
	Let $(\phi,\psi)$ be a nontrivial solution of (\ref{system4}) then
	\begin{eqnarray}\label{relationJand Iomega}
	J(\phi,\psi)=\frac{n^{\frac{n}{4}}}{2}\left(\frac{6-n}{\omega}\right)^{\frac{3}{2}-\frac{n}{4}}I_{\omega}(\phi,\psi)^{\frac{1}{2}}.
	\end{eqnarray}
\end{lem}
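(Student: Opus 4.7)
The statement is essentially a bookkeeping identity, so the plan is to obtain it by direct substitution of the three Pohozaev-type identities in Lemma \ref{identitiesfunctionals} into the definition \eqref{functionalJgeral2} of $J$.

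First I would record the three identities for a nontrivial solution $(\phi,\psi)$ of \eqref{system4}, written with $I_\omega:=I_\omega(\phi,\psi)$ on the right-hand side:
\begin{equation*}
P(\phi,\psi)=2I_\omega,\qquad K(\phi,\psi)=nI_\omega,\qquad Q(\phi,\psi)=\frac{6-n}{\omega}I_\omega.
\end{equation*}
I would then note that these expressions make sense and are positive: since $(\phi,\psi)$ is nontrivial, part (3) of Remark \ref{obsnorm} gives $P(\phi,\psi)>0$, which by the first identity forces $I_\omega>0$; hence $K$ and $Q$ are strictly positive as well, and in particular the denominator in $J(\phi,\psi)$ does not vanish.

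Next, plug these expressions into \eqref{functionalJgeral2}:
\begin{equation*}
J(\phi,\psi)=\frac{Q^{\frac{3}{2}-\frac{n}{4}}K^{\frac{n}{4}}}{P}
=\frac{\bigl(\tfrac{6-n}{\omega}I_\omega\bigr)^{\frac{3}{2}-\frac{n}{4}}(nI_\omega)^{\frac{n}{4}}}{2I_\omega},
\end{equation*}
and collect the powers of $I_\omega$, using
\begin{equation*}
\left(\tfrac{3}{2}-\tfrac{n}{4}\right)+\tfrac{n}{4}-1=\tfrac{1}{2}.
\end{equation*}
Separating the numerical constants from the remaining $I_\omega^{1/2}$ factor then yields exactly
\begin{equation*}
J(\phi,\psi)=\frac{n^{\frac{n}{4}}}{2}\left(\frac{6-n}{\omega}\right)^{\frac{3}{2}-\frac{n}{4}} I_\omega^{\frac{1}{2}}.
\end{equation*}

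There is no real obstacle here: the content of the lemma is entirely in Lemma \ref{identitiesfunctionals}, and the remaining argument is a one-line algebraic simplification. The only subtlety worth flagging explicitly in the write-up is the positivity of $I_\omega$ (so that the fractional power $I_\omega^{1/2}$ is unambiguously defined), which, as noted above, comes for free from Remark \ref{obsnorm}.
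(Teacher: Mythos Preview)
Your proof is correct and follows exactly the same approach as the paper: substitute the identities from Lemma \ref{identitiesfunctionals} into the definition of $J$ and simplify the resulting powers of $I_\omega$. Your added remark on the positivity of $I_\omega$ (ensuring the fractional power is well-defined) is a nice clarification that the paper leaves implicit.
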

\begin{proof}
	Combining expressions  (\ref{b}), (\ref{d}) and (\ref{e}) in Lemma \ref{identitiesfunctionals} and the definition of $J$,   we have
	\begin{eqnarray*}
		J(\phi,\psi)&=&\frac{Q(\phi,\psi)^{\frac{3}{2}-\frac{n}{4}}K(\phi,\psi)^{\frac{n}{4}}}{P(\phi,\psi)}\\
		&=&\frac{\left(\frac{6-n}{\omega}\right)^{\frac{3}{2}-\frac{n}{4}}I_{\omega}(\phi,\psi)^{\frac{3}{2}-\frac{n}{4}}n^{\frac{n}{4}}I_{\omega}(\phi,\psi)^{\frac{n}{4}}}{2I_{\omega}(\phi,\psi)}\\
		&=&\frac{n^{\frac{n}{4}}}{2}\left(\frac{6-n}{\omega}\right)^{\frac{3}{2}-\frac{n}{4}}I_{\omega}(\phi,\psi)^{\frac{1}{2}}.
	\end{eqnarray*}
The proof of the lemma is thus completed.	
\end{proof}

As an immediate consequence, we obtain the following.

\begin{coro}\label{corolario6.1}
	A nontrivial solution $(\phi,\psi)$ of (\ref{system4})   is a ground state if and only if it is a minimizer of $J$.
\end{coro}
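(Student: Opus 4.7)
The plan is to deduce the corollary directly from Lemma \ref{lemma3}, in line with the phrase ``As an immediate consequence'' preceding the statement. Since a ground state is by definition a minimizer of $I_\omega$ over the set $\mathcal{C}_\omega$ of nontrivial critical points, what must be shown is that a nontrivial solution of (\ref{system4}) minimizes $I_\omega$ on $\mathcal{C}_\omega$ if and only if it minimizes $J$ on $\mathcal{C}_\omega$.

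First I would verify that $I_\omega(\phi,\psi) > 0$ on $\mathcal{C}_\omega$, so that the square root appearing on the right-hand side of (\ref{relationJand Iomega}) is meaningful. By item (3) of Remark \ref{obsnorm}, $P(\phi,\psi) > 0$ for every nontrivial solution, and identity (\ref{b}) then gives $I_\omega(\phi,\psi) = P(\phi,\psi)/2 > 0$. Moreover, for $1 \leq n \leq 5$ and $\omega > 0$ the prefactor
$$A_{n,\omega} := \frac{n^{n/4}}{2}\left(\frac{6-n}{\omega}\right)^{3/2 - n/4}$$
is strictly positive, so Lemma \ref{lemma3} can be rewritten as
$$J(\phi,\psi) = A_{n,\omega}\, I_\omega(\phi,\psi)^{1/2}, \qquad (\phi,\psi) \in \mathcal{C}_\omega.$$
Consequently, on $\mathcal{C}_\omega$ the functional $J$ is a strictly increasing function of $I_\omega$.

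The conclusion then follows at once from this monotone correspondence: a nontrivial solution $(\phi,\psi)$ minimizes $J$ over $\mathcal{C}_\omega$ if and only if it minimizes $I_\omega$ over $\mathcal{C}_\omega$, and by the definition of ground state the latter condition is precisely $(\phi,\psi) \in \mathcal{G}_\omega$. I do not anticipate a genuine obstacle. The only point worth flagging is the strict positivity of $I_\omega$ on $\mathcal{C}_\omega$ noted above: it is exactly this that makes $t \mapsto A_{n,\omega}\, t^{1/2}$ a bijective order-preserving map on the range of $I_\omega|_{\mathcal{C}_\omega}$, so that the equivalence of the two minimization problems becomes automatic.
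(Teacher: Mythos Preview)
Your proposal is correct and follows exactly the intended route: the paper treats the corollary as an immediate consequence of Lemma~\ref{lemma3} without writing out a proof, and your argument---observing that $I_\omega>0$ on $\mathcal{C}_\omega$ and that $J=A_{n,\omega}\,I_\omega^{1/2}$ is therefore a strictly increasing function of $I_\omega$---is precisely the one-line justification the authors have in mind. There is nothing to add; your handling of the positivity of $I_\omega$ via Remark~\ref{obsnorm}(3) and identity~(\ref{b}) is the right way to make the implicit step explicit.
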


In view of Corollary \ref{corolario6.1}, the idea to obtain the ground states is to minimize $J$ on the set $\mathcal{P}$.

\subsection{Existence of ground states}

Before proceeding we note that if we know a ground state for $\omega=1$ then we know a ground state for any $\omega>0$ (see Proposition  4.3 in \cite{Hayashi}). In fact, if
$(\phi_{1},\psi_{1})$ is a ground state for (\ref{system4}) with $\omega=1$, then	$$(\phi_{\omega}(x),\psi_{\omega}(x))=(\omega\phi_{1}(\sqrt{\omega}x),\omega\psi_{1}(\sqrt{\omega}x)),$$	is a ground state for (\ref{system4}), with $\omega>0$.

Our main theorem concerning ground states is the following.

\begin{teore}[Existence of ground states]\label{thm:existenceGSJgeral}
For  $1\leq n\leq 5$, the infimum
\begin{equation}\label{alph1def}
\alpha_1=\inf\limits_{(\phi,\psi)\in \mathcal{P}}J(\phi,\psi)
\end{equation}
is attained at a pair the functions $(\phi_{0},\psi_{0})\in \mathcal{P} $ such that
\begin{itemize}
		\item[(i)] $\phi_{0}$ and $\psi_{0}$ are non-negative and radially symmetric;
		
		\item[(ii)]   There exist $t_{0}>0$ and $l_{0}>0$ such that $(\phi,\psi)=(t_{0}\delta_{l_{0}}\phi_{0},t_{0}\delta_{l_{0}}\psi_{0})$ is a positive ground state  of \eqref{system4} with  $\omega=1$, where $(\delta_lf)(x)=f(x/l)$;
		
		\item[(iii)] If $(\tilde{\phi},\tilde{\psi})$ is any ground state of (\ref{system4}), with $\omega=1$, then
		\begin{equation}\label{inffunctionalJ}
		\alpha_{1}=\frac{n^{\frac{n}{4}}}{2}\left(6-n \right)^{1-\frac{n}{4}}Q(\tilde{\phi},\tilde{\psi})^{1/2}.
		\end{equation}
		
	\end{itemize}
\end{teore}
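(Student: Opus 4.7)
The plan is to obtain $(\phi_{0},\psi_{0})$ as the limit of a minimizing sequence for $J$ taken among radially symmetric nonnegative decreasing functions, and then to recover a solution of \eqref{system4} via the Euler--Lagrange equations associated with $\log J$, after which a two-parameter rescaling matches $\omega=1$. H\"older's inequality together with the Sobolev embedding gives $P(\phi,\psi)\le\|\phi\|_{L^{3}}^{2}\|\psi\|_{L^{3}}\lesssim K^{n/4}Q^{3/2-n/4}$ for $1\le n\le5$, so $\alpha_{1}$ is finite and strictly positive.

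Starting from any minimizing sequence $(\phi_{k},\psi_{k})\subset\mathcal{P}$, I would first replace each component by its modulus (which leaves $Q$ and $K$ unchanged and does not decrease $P$) and then by its symmetric decreasing rearrangement: the $L^{2}$ norms are preserved, the Dirichlet integrals do not increase by P\'olya--Szeg\H{o}, and $\int\phi^{2}\psi\,dx$ does not decrease by the Hardy--Littlewood inequality together with the identity $(\phi^{*})^{2}=(\phi^{2})^{*}$ for $\phi\ge0$. Hence the sequence may be assumed to lie in $H^{1}_{rd}(\R^{n})\times H^{1}_{rd}(\R^{n})$. Exploiting that $J$ is invariant under both $(\phi,\psi)\mapsto(t\phi,t\psi)$ and $(\phi,\psi)\mapsto(\delta_{l}\phi,\delta_{l}\psi)$, I use the two degrees of freedom to impose $Q(\phi_{k},\psi_{k})=K(\phi_{k},\psi_{k})=1$, so that $J(\phi_{k},\psi_{k})=1/P(\phi_{k},\psi_{k})\to\alpha_{1}$.

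The sequence is now bounded in $H^{1}\times H^{1}$, hence a subsequence converges weakly to some $(\phi_{0},\psi_{0})$. The compact embedding $H^{1}_{rd}(\R^{n})\hookrightarrow L^{3}(\R^{n})$ valid in every dimension $1\le n\le5$ by \cite[Proposition~1.7.1]{Cazenave} upgrades this to strong $L^{3}$ convergence, and H\"older then yields $P(\phi_{k},\psi_{k})\to P(\phi_{0},\psi_{0})=1/\alpha_{1}>0$, so $(\phi_{0},\psi_{0})\in\mathcal{P}$. Weak lower semicontinuity gives $Q(\phi_{0},\psi_{0}),K(\phi_{0},\psi_{0})\le1$; a strict inequality would contradict the definition of $\alpha_{1}$, so both equal $1$ and $(\phi_{0},\psi_{0})$ realizes the infimum, proving (i).

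For (ii), I would compute $\partial_{\phi}\log J=0$ and $\partial_{\psi}\log J=0$ at $(\phi_{0},\psi_{0})$, which after elementary manipulation gives
\begin{equation*}
-\frac{n}{4K}\Delta\phi_{0}+\frac{6-n}{4Q}\phi_{0}=\frac{1}{P}\phi_{0}\psi_{0},\qquad
-\frac{n\kappa}{4K}\Delta\psi_{0}+\frac{6-n}{2Q}\psi_{0}=\frac{1}{2P}\phi_{0}^{2},
\end{equation*}
where $Q,K,P$ are evaluated at $(\phi_{0},\psi_{0})$. Inserting the ansatz $(\phi,\psi)=(t_{0}\delta_{l_{0}}\phi_{0},t_{0}\delta_{l_{0}}\psi_{0})$ into \eqref{system4} with $\omega=1$ and matching coefficients in both equations yields a linear system which is consistent and solved by $l_{0}^{2}=(6-n)K/(nQ)$ and $t_{0}=2Q/((6-n)P)$; the resulting pair is radial and nonnegative, strict positivity follows from the strong maximum principle, and by Corollary~\ref{corolario6.1} it is a ground state. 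Finally, (iii) comes from evaluating \eqref{relationJand Iomega} at any ground state with $\omega=1$ and substituting $I_{1}(\tilde{\phi},\tilde{\psi})=Q(\tilde{\phi},\tilde{\psi})/(6-n)$ from Lemma~\ref{identitiesfunctionals}\eqref{e}. The chief delicate point is maintaining compactness when $n=1$, where $H^{1}_{r}\hookrightarrow L^{3}$ fails; restricting to radially \emph{decreasing} functions, which is harmless after rearrangement, is precisely what averts this loss of compactness and handles all dimensions $1\le n\le5$ uniformly.
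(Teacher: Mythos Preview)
Your proof is correct and follows essentially the same route as the paper: normalize a nonnegative, radially decreasing minimizing sequence so that $Q=K=1$, pass to the limit using the compact embedding $H^{1}_{rd}(\R^{n})\hookrightarrow L^{3}(\R^{n})$ (valid for all $1\le n\le5$), recover $Q(\phi_{0},\psi_{0})=K(\phi_{0},\psi_{0})=1$ by lower semicontinuity, derive the Euler--Lagrange system, and rescale to hit \eqref{system4} with $\omega=1$. The only cosmetic differences are that you differentiate $\log J$ rather than $J$ and obtain $t_{0},l_{0}$ by matching PDE coefficients, whereas the paper verifies $I_{1}'(\phi,\psi)=0$ directly via the scaling rules of Lemma~\ref{scalling}; your values $l_{0}^{2}=(6-n)K/(nQ)=(6-n)/n$ and $t_{0}=2Q/((6-n)P)=2\alpha_{1}/(6-n)$ coincide with the paper's.
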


\begin{obs}
If $n=4$ then the constant $\alpha_{1}$ in \eqref{inffunctionalJ} reduces to $\alpha_{1}=2Q(\tilde{\phi},\tilde{\psi})^{1/2}$, which  is exactly the one obtained in \cite[Theorem 5.1]{Hayashi}.
\end{obs}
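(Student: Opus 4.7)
The plan is to carry out the trivial specialization of formula (\ref{inffunctionalJ}) from Theorem \ref{thm:existenceGSJgeral}(iii) to the case $n=4$, and then to verify that the resulting constant matches the one already computed in \cite[Theorem 5.1]{Hayashi}. Since Theorem \ref{thm:existenceGSJgeral} has just been stated, I would simply take its conclusion (iii) as given and evaluate.

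First I would evaluate each $n$-dependent factor at $n=4$. The prefactor $n^{n/4}/2$ becomes $4^{1}/2=2$, and the exponent $1-n/4$ vanishes, so $(6-n)^{1-n/4}=2^{0}=1$. Multiplying, the right-hand side of (\ref{inffunctionalJ}) collapses to $2\,Q(\tilde{\phi},\tilde{\psi})^{1/2}$. This is a one-line calculation with no hidden subtlety: the key numerical coincidence is that at the mass-critical dimension $n=4$ the middle factor $(6-n)^{1-n/4}$ becomes $1$ while $n^{n/4}$ collapses to $n$ itself, which cancels half the $1/2$ prefactor and leaves precisely $2$.

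The only substantive step is the comparison with \cite[Theorem 5.1]{Hayashi}. Here one checks that the Weinstein functional $J$ in (\ref{functionalJgeral2}) uses the same normalization as in that reference, in particular that the charge (\ref{conservationcharge1}) includes the factor $2$ in front of $\|\psi\|_{L^2}^{2}$ consistent with the weight appearing in the $H^1(\R^4)$ critical case treated there. Granted those conventions, the identity $\alpha_{1}=2Q(\tilde{\phi},\tilde{\psi})^{1/2}$ is exactly the constant appearing in \cite[Theorem 5.1]{Hayashi}. No real obstacle arises: the remark is a consistency check confirming that our more general formula recovers the already-known $n=4$ value, and the argument is purely arithmetic.
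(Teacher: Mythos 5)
Your computation is correct and is exactly the (implicit) argument behind the remark: substituting $n=4$ into \eqref{inffunctionalJ} gives $n^{n/4}/2=4/2=2$ and $(6-n)^{1-n/4}=2^{0}=1$, hence $\alpha_{1}=2Q(\tilde{\phi},\tilde{\psi})^{1/2}$. The paper offers no further justification, so your direct evaluation matches its approach.
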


Below we will prove Theorem \ref{thm:existenceGSJgeral}. To begin with, we show the following.

\begin{lem}\label{lemmaalpha1}
If $\alpha_{1}$ is defined as in \eqref{alph1def}  then $\alpha_{1}>0$.
\end{lem}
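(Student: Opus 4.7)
The plan is to show that $J$ is bounded below by a positive constant on $\mathcal{P}$, by establishing a vectorial Gagliardo--Nirenberg-type estimate
\begin{equation*}
P(\phi,\psi) \leq C\, Q(\phi,\psi)^{\frac{3}{2}-\frac{n}{4}} K(\phi,\psi)^{\frac{n}{4}},
\end{equation*}
from which the lemma follows immediately with $\alpha_1 \geq 1/C > 0$.

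First, I would control the cubic form $P$ by scalar $L^3$-norms: by H\"older's inequality,
\begin{equation*}
P(\phi,\psi) = \int \phi^2 \psi\, dx \leq \|\phi^2\|_{L^{3/2}}\|\psi\|_{L^3} = \|\phi\|_{L^3}^{2}\|\psi\|_{L^3}.
\end{equation*}
Next I would apply the classical scalar Gagliardo--Nirenberg inequality in $\R^n$ to each factor. The exponent $\theta = n/6$ is forced by the scaling identity $\tfrac{1}{3} = (1-\theta)\tfrac{1}{2} + \theta\bigl(\tfrac{1}{2}-\tfrac{1}{n}\bigr)$, so that
\begin{equation*}
\|f\|_{L^3} \leq C\, \|f\|_{L^2}^{1-n/6}\|\nabla f\|_{L^2}^{n/6}, \qquad 1 \leq n \leq 5,
\end{equation*}
(valid in the whole range of dimensions considered, since $n/6 < 1$). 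Substituting into the previous bound yields
\begin{equation*}
P(\phi,\psi) \leq C\, \|\phi\|_{L^2}^{2-n/3}\|\nabla \phi\|_{L^2}^{n/3}\|\psi\|_{L^2}^{1-n/6}\|\nabla \psi\|_{L^2}^{n/6}.
\end{equation*}

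The remaining step is to absorb each component norm into $Q$ and $K$. From the definitions \eqref{conservationcharge1} and \eqref{FK}, one has the trivial bounds
\begin{equation*}
\|\phi\|_{L^2}^{2} \leq Q(\phi,\psi),\qquad 2\|\psi\|_{L^2}^{2} \leq Q(\phi,\psi),\qquad \|\nabla \phi\|_{L^2}^{2} \leq K(\phi,\psi),\qquad \kappa\|\nabla \psi\|_{L^2}^{2} \leq K(\phi,\psi).
\end{equation*}
Raising each to the appropriate power and multiplying, the total exponent of $Q$ becomes $\bigl(1-\tfrac{n}{6}\bigr)+\bigl(\tfrac{1}{2}-\tfrac{n}{12}\bigr) = \tfrac{3}{2}-\tfrac{n}{4}$ and the total exponent of $K$ becomes $\tfrac{n}{6}+\tfrac{n}{12} = \tfrac{n}{4}$, as needed. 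Rearranging produces the claimed inequality, hence $J(\phi,\psi) \geq 1/C$ uniformly on $\mathcal{P}$.

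There is no genuine obstacle here; the main point is bookkeeping of exponents to verify that they match the homogeneity of $J$, and checking that $n/6 < 1$ so that the scalar Gagliardo--Nirenberg inequality is available for $1 \leq n \leq 5$. The constant $C$ obtained this way is not sharp (its sharp value will in fact be identified in Corollary \ref{corollarybestconstant} via the minimizer constructed in Theorem \ref{thm:existenceGSJgeral}), but positivity is all that is required at this stage.
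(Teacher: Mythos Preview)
Your proposal is correct and follows essentially the same approach as the paper: H\"older's inequality to bound $P(\phi,\psi)\leq \|\phi\|_{L^3}^2\|\psi\|_{L^3}$, then the scalar Gagliardo--Nirenberg inequality with $\theta=n/6$ on each factor, and finally the trivial absorption of the individual norms into $Q$ and $K$. The paper carries the constants through explicitly (obtaining $\alpha_1\geq C^{-3}\kappa^{n/12}2^{-n/12+1/2}$), but the logic is identical.
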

\begin{proof}
First we recall  the Gagliardo-Nirenberg inequality $\|\phi\|_{L^3}\leq C\|\nabla \phi\|_{L^2}^{\frac{n}{6}}\| \phi\|_{L^2}^{1-\frac{n}{6}}$. Thus, from H\"older's inequality,
\[
\begin{split}
	P(\phi,\psi)&\leq\|\phi\|_{L^3}^{2}\|\psi\|_{L^3}\\
&\leq	C^3\Big(\|\nabla \phi\|_{L^2}^{\frac{n}{6}}\| \phi\|_{L^2}^{1-\frac{n}{6}}\Big)^{2}\|\nabla \psi\|_{L^2}^{\frac{n}{6}}\| \psi\|_{L^2}^{1-\frac{n}{6}}\\
&=C^3\kappa^{-\frac{n}{12}}2^{\frac{n}{12}-\frac{1}{2}}Q(\phi,\psi)^{\frac{3}{2}-\frac{n}{4}}K(\phi,\psi)^{\frac{n}{4}}.
\end{split}
\]
	Then, from the definition of  $\alpha_{1}$,
$$
0<C^{-3}\kappa^{\frac{n}{12}}2^{-\frac{n}{12}+\frac{1}{2}}\leq \alpha_{1},
$$
which yields the desired.
\end{proof}

In the sequel, given any non-negative function $\phi\in H^1(\R^n)$ we denote by $\phi^*$ its symmetric-decreasing rearrangement (see, for instance, \cite{Leoni}). Also, for any $l>0$,  $(\delta_{l}f)(x)=f\left(x/l\right)$.

\begin{lem}\label{scalling} Assume  $a,l>0$ and $(\phi,\psi)\in H^1(\R^n)\times H^1(\R^n)$. Then  the following properties hold:
\begin{itemize}
	\item[(i)] $Q(a\delta_l\phi,a\delta_l\psi)=a^{2}l^{n}Q(\phi,\psi), \qquad  Q'(a\delta_l\phi,a\delta_l\psi)(u,v)=al^nQ'(\phi,\psi)(\delta_{l^{-1}}u,\delta_{l^{-1}}v)$;
	\item[(ii)] $ K(a\delta_l\phi,a\delta_l\psi)=a^{2}l^{n-2}K(\phi,\psi), \quad K'(a\delta_l\phi,a\delta_l\psi)(u,v)=al^{n-2}K'(\phi,\psi)(\delta_{l^{-1}}u,\delta_{l^{-1}}v)$;
	\item[(iii)] $P(a\delta_l\phi,a\delta_l\psi)=a^{3}l^nP(\phi,\psi), \qquad  P'(a\delta_l\phi,a\delta_l\psi)(u,v)=a^{2}l^nP'(\phi,\psi)(\delta_{l^{-1}}u,\delta_{l^{-1}}v)$.\\
	In addition, if $\phi$ and $\psi$ are non-negative then
	\item[(iv)] $Q(\phi^{*},\psi^{*})=Q(\phi,\psi)$;
	\item[(v)] $K(\phi^{*},\psi^{*})\leq K(\phi,\psi)$;
	\item[(vi)] $P(\phi^{*},\psi^{*})\geq P(\phi,\psi)$.
\end{itemize}
\end{lem}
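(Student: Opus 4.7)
The plan is to treat items (i)--(iii) as direct change-of-variable computations, and items (iv)--(vi) as applications of standard properties of the symmetric-decreasing rearrangement. No clever idea is required; the only mild care is in the derivative formulas and in the rearrangement of $\phi^{2}$.

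First, for (i), I would simply use the substitution $y=x/l$ in the integrals defining $\|\cdot\|_{L^{2}}^{2}$, which produces the factor $l^{n}$ from the Jacobian, while pulling out the prefactor $a^{2}$. For (ii), the chain rule gives $\nabla(a\delta_{l}\phi)(x)=(a/l)(\nabla\phi)(x/l)$, so an identical change of variables yields the factor $a^{2}l^{n-2}$. For (iii), one expands $(a\delta_{l}\phi)^{2}(a\delta_{l}\psi)=a^{3}\phi^{2}(x/l)\psi(x/l)$ and again changes variables. The derivative formulas are obtained in the same way, starting from the explicit expressions
\[
Q'(\phi,\psi)(u,v)=2\,\mathrm{Re}(\phi,u)_{L^{2}}+4\,\mathrm{Re}(\psi,v)_{L^{2}},
\]
\[
K'(\phi,\psi)(u,v)=2\,\mathrm{Re}(\nabla\phi,\nabla u)_{L^{2}}+2\kappa\,\mathrm{Re}(\nabla\psi,\nabla v)_{L^{2}},
\]
\[
P'(\phi,\psi)(u,v)=2\int\phi\psi\,u\,dx+\int\phi^{2}v\,dx,
\]
substituting $y=x/l$, and recognizing that $u(ly)=(\delta_{l^{-1}}u)(y)$, which repackages the result into $Q'(\phi,\psi)(\delta_{l^{-1}}u,\delta_{l^{-1}}v)$, and similarly for $K'$ and $P'$. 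The different powers of $a$ and $l$ come out exactly as stated because $Q$, $K$, $P$ are respectively quadratic of weight $(2,0)$, $(2,2)$ and $(3,0)$ in $(a, {\rm derivative\ count})$.

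For items (iv)--(vi), I would invoke the standard properties of the symmetric-decreasing rearrangement on $\R^{n}$. Property (iv) follows from the fact that $\phi\mapsto\phi^{*}$ is equimeasurable, hence preserves all $L^{p}$-norms; applying this to $\phi$ and to $\psi$ gives $Q(\phi^{*},\psi^{*})=Q(\phi,\psi)$. Property (v) is the vector form of the P\'olya--Szeg\H{o} inequality: $\|\nabla\phi^{*}\|_{L^{2}}\le\|\nabla\phi\|_{L^{2}}$ and $\|\nabla\psi^{*}\|_{L^{2}}\le\|\nabla\psi\|_{L^{2}}$, which added with weights $1$ and $\kappa$ yield the claim.

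For (vi), I would use the Hardy--Littlewood inequality $\int fg\,dx\le\int f^{*}g^{*}\,dx$ for non-negative measurable $f,g$, together with the elementary identity $(\phi^{2})^{*}=(\phi^{*})^{2}$, valid because $\phi\ge 0$ implies that $\phi$ and $\phi^{2}$ share the same super-level set structure (the $*$-operation commutes with any monotone increasing function of a non-negative function). Applying Hardy--Littlewood to $f=\phi^{2}$ and $g=\psi$ then gives
\[
P(\phi,\psi)=\int\phi^{2}\psi\,dx\le\int(\phi^{2})^{*}\psi^{*}\,dx=\int(\phi^{*})^{2}\psi^{*}\,dx=P(\phi^{*},\psi^{*}),
\]
which is the desired reverse inequality. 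The only step requiring any attention is the identity $(\phi^{2})^{*}=(\phi^{*})^{2}$; everything else is routine, and I expect no genuine obstacle in the proof.
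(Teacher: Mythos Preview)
Your proposal is correct and matches the paper's approach: the paper simply declares that ``the proofs are simple calculations'' and, for (iv)--(vi), refers to standard rearrangement facts in Leoni's book, which are precisely the equimeasurability, P\'olya--Szeg\H{o}, and Hardy--Littlewood properties you spell out. You have in fact supplied more detail than the paper does, including the point $(\phi^{2})^{*}=(\phi^{*})^{2}$, so nothing is missing.
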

\begin{proof}
The proofs are simple calculations. For parts (iv), (v), and (vi) see, for instance, Chapters 6 and 16 in \cite{Leoni}.
\end{proof}

The following lemma establishes some properties of the functional $J$, under the transformations of scaling, dilation, and symmetrization.

\begin{lem}\label{lemma1}
If $a,l>0$ and $(\phi,\psi)\in\mathcal{P}$, then
\begin{enumerate}
\item[(i)] $J(a\delta_{l}\phi,a\delta_{l}\psi)=J(\phi,\psi)$;
\item[(ii)] $J(|\phi|,|\psi|)\leq J(\phi,\psi)$;
\item[(iii)] $J'(a\delta_{l}\phi,a\delta_{l}\psi)(u,v)=a^{-1}J'(\phi,\psi)(\delta_{l^{-1}}u,\delta_{l^{-1}}v)$.\\
	In addition, if $\phi$ and $\psi$ are non-negative then
\item[(iv)] $J(\phi^{*},\psi^{*})\leq J(\phi,\psi)$. 
\end{enumerate}
\end{lem}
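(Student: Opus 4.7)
The plan is to prove each of the four assertions essentially by invoking Lemma \ref{scalling} (the scaling, rearrangement, and derivative identities for $Q$, $K$, $P$) together with elementary properties of $L^2$, gradients, and symmetrization, and then to deduce (iii) from (i) by a clean chain-rule trick. I expect no step to be genuinely hard; the main point requiring care is (ii), because the absolute value operation behaves differently on each of the three functionals $Q$, $K$, $P$.

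First, for (i), I would simply substitute the scaling formulas of Lemma \ref{scalling} parts (i)--(iii) into the definition
\[
J(\phi,\psi)=\frac{Q(\phi,\psi)^{\frac{3}{2}-\frac{n}{4}}K(\phi,\psi)^{\frac{n}{4}}}{P(\phi,\psi)}
\]
and collect powers of $a$ and $l$. The numerator acquires a factor $(a^{2}l^{n})^{\frac{3}{2}-\frac{n}{4}}(a^{2}l^{n-2})^{\frac{n}{4}}=a^{3}l^{n}$, which exactly matches the factor $a^{3}l^{n}$ picked up by $P$ in the denominator; hence $J$ is invariant. For (iv), again Lemma \ref{scalling} parts (iv)--(vi) give $Q(\phi^{*},\psi^{*})=Q(\phi,\psi)$, $K(\phi^{*},\psi^{*})\leq K(\phi,\psi)$ and $P(\phi^{*},\psi^{*})\geq P(\phi,\psi)$, so $J(\phi^{*},\psi^{*})\leq J(\phi,\psi)$ follows at once (the hypothesis $P(\phi,\psi)>0$ ensures that $(\phi^{*},\psi^{*})\in\mathcal{P}$).

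For (ii), working with real-valued $\phi,\psi$ (as in the definition of standing wave), I would observe three facts separately. The $L^{2}$-norm is unchanged by taking absolute values, so $Q(|\phi|,|\psi|)=Q(\phi,\psi)$. For $K$, I would invoke Kato's inequality $|\nabla|\phi||\leq|\nabla\phi|$ a.e.\ for $\phi\in H^{1}$ to conclude $K(|\phi|,|\psi|)\leq K(\phi,\psi)$. Finally, since $|\phi|^{2}=\phi^{2}$ pointwise and $|\psi|\geq \psi$,
\[
P(|\phi|,|\psi|)=\int \phi^{2}|\psi|\,dx \geq \int \phi^{2}\psi\,dx = P(\phi,\psi)>0,
\]
so $(|\phi|,|\psi|)\in\mathcal{P}$ and $J(|\phi|,|\psi|)\leq J(\phi,\psi)$.

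For (iii), I would deduce the derivative identity directly from (i) rather than differentiating $J$ from scratch. The key algebraic identity is
\[
a\delta_{l}\phi+\varepsilon u = a\delta_{l}\bigl(\phi+\varepsilon a^{-1}\delta_{l^{-1}}u\bigr),
\]
which holds because $\delta_{l}\delta_{l^{-1}}$ is the identity. Applying (i) to the pair on the right-hand side gives
\[
J(a\delta_{l}\phi+\varepsilon u,\,a\delta_{l}\psi+\varepsilon v)=J(\phi+\varepsilon a^{-1}\delta_{l^{-1}}u,\,\psi+\varepsilon a^{-1}\delta_{l^{-1}}v),
\]
and differentiating both sides in $\varepsilon$ at $\varepsilon=0$, together with linearity of $J'(\phi,\psi)$ in its direction, yields
\[
J'(a\delta_{l}\phi,a\delta_{l}\psi)(u,v)=a^{-1}J'(\phi,\psi)(\delta_{l^{-1}}u,\delta_{l^{-1}}v),
\]
as claimed. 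The only subtle point anywhere in the lemma is the use of Kato's inequality in (ii); everything else is a matter of bookkeeping with Lemma \ref{scalling}.
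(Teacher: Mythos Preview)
Your proposal is correct and follows the same approach as the paper, which simply states that the proofs are immediate consequences of Lemma \ref{scalling}. Your argument is in fact more detailed---particularly in supplying the absolute-value inequalities needed for (ii), which are not literally listed in Lemma \ref{scalling}---and your derivation of (iii) from (i) via the substitution $a\delta_{l}\phi+\varepsilon u = a\delta_{l}(\phi+\varepsilon a^{-1}\delta_{l^{-1}}u)$ is a clean alternative to computing $J'$ directly from the derivative formulas for $Q'$, $K'$, $P'$ recorded in Lemma \ref{scalling}.
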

\begin{proof}
The proofs are immediate consequences of Lemma \ref{scalling}.
\end{proof}

Now we are able to proof Theorem \ref{thm:existenceGSJgeral}. We will follow the arguments presented in references \cite{Hayashi} and \cite{Weinstein}.

\begin{proof}[Proof of Theorem \ref{thm:existenceGSJgeral}]

Let $\{(\phi_{j},\psi_{j})\}\subset \mathcal{P} $ be a minimizing sequence for $J$, i.e., 
$$
\lim_{j\to \infty}J(\phi_{j},\psi_{j})=\alpha_{1}.
$$
In view of Lemma \ref{lemma1} we have $J(|\phi_j|,|\psi_j|)\leq J(\phi_j,\psi_j)$. So, we may assume that $\phi_{j},\psi_{j}$ are non-negative. In addition, also from Lemma \ref{lemma1},  $J(\phi_{j}^{*},\psi_{j}^{*})\leq J(\phi_{j},\psi_{j})$; thus, we also may assume that  $\phi_{j},\psi_{j}$ are radially symmetric and nonincreasing functions in $H^{1}$.
Define 
$$\tilde{\phi_{j}}=t_{j}\delta_{l_{j}}\phi_{j}\;\;\;\;\;\;\;\;\mbox{and}\;\;\;\;\;\;\;\;\tilde{\psi_{j}}=t_{j}\delta_{l_{j}}\psi_{j},$$
where
$$
t_{j}=\frac{Q(\phi_{j},\psi_{j})^{\frac{n}{4}-\frac{1}{2}}}{K(\phi_{j},\psi_{j})^{\frac{n}{4}}}\;\;\;\;\;\;\;\;\mbox{and}\;\;\;\;\;\;\;\;l_{j}=\frac{K(\phi_{j},\psi_{j})^{\frac{1}{2}}}{Q(\phi_{j},\psi_{j})^{\frac{1}{2}}}.
$$
An application of Lemma \ref{scalling},    with $a=t_{j}$ and $l=l_{j}$, gives
\begin{equation}\label{equalnorm}
K(\tilde{\phi_{j}},\tilde{\psi_{j}})=Q(\tilde{\phi_{j}},\tilde{\psi_{j}})=1.
\end{equation}
Hence, 
\begin{equation}\label{convergenceP3}
\frac{1}{P(\tilde{\phi_{j}},\tilde{\psi_{j}})}=\frac{Q(\tilde{\phi_{j}},\tilde{\psi_{j}})^{\frac{3}{2}-\frac{n}{4}}K(\tilde{\phi_{j}},\tilde{\psi_{j}})^{\frac{n}{4}}}{P(\tilde{\phi_{j}},\tilde{\psi_{j}})}=J(\tilde{\phi_{j}},\tilde{\psi_{j}})=J(\phi_{j},\psi_{j})\to \alpha_{1}>0.
\end{equation}

Recall that $H_{rd}^{1}(\R^n)$ denotes the closed subspace of $H^{1}(\R^n)$ composed by radially symmetric and nonincreasing functions. It follows from \eqref{equalnorm} that sequence $(\tilde{\phi_{j}},\tilde{\psi_{j}})$ is bounded in the space $H_{rd}^{1}(\R^n)\times H_{rd}^{1}(\R^n)$. Consequently, there exist a subsequence, still denoted by $(\tilde{\phi_{j}},\tilde{\psi_{j}})$, and functions  $\phi_{0},\psi_{0}\in  H_{rd}^{1}(\R^n)$  such that 
 \begin{equation*}
 \tilde{\phi_{j}}\rightharpoonup \phi_{0},\;\;\;\;\;\;\;\;\tilde{\psi_{j}}\rightharpoonup \psi_{0}, \;\;\;\;\;\;\mbox{in\;\; $H^{1}(\R^n)$}.
 \end{equation*}
By recalling the compactness of the embedding $ H_{rd}^{1}(\R^n)\hookrightarrow  L^{3}(\R^n)$, $1\leq n\leq 5$, we see that
 $(\tilde{\phi_{j}},\tilde{\psi_{j}})\to (\phi_{0},\psi_{0})$, in $L^3\times L^3$ and \textit{almost everywhere}. This immediately implies that $\phi_0$ and $\psi_0$ are non-negative and
 $$
 \lim_{j\to \infty}P(\tilde{\phi_{j}},\tilde{\psi_{j}})=P(\phi_{0},\psi_{0}).
 $$
 Therefore by (\ref{convergenceP3}) we get
\begin{equation}\label{relationPandalpha2}
P(\phi_{0},\psi_{0})=\lim_{j\to \infty}P(\tilde{\phi_{j}},\tilde{\psi_{j}})=\alpha_{1}^{-1}>0,
\end{equation}
which means that $(\phi_{0},\psi_{0})\in \mathcal{P}$.

On the other hand, the lower semi-continuity of the weak convergence gives
\[
Q(\phi_{0},\psi_{0})
\leq\liminf_{j\to\infty}Q(\tilde{\phi_{j}},\tilde{\psi_{j}})=1
\]
and
\[
K(\phi_{0},\psi_{0})
\leq\liminf_{j\to\infty}K(\tilde{\phi_{j}},\tilde{\psi_{j}})=1.
\]

Therefore,  the definitions of $\alpha_{1}$ and $J$ and \eqref{relationPandalpha2}  yield
\begin{equation}\label{inequJKandQ2}
\alpha_{1}\leq  J(\phi_{0},\psi_{0})=\frac{Q(\phi_{0},\psi_{0})^{\frac{3}{2}-\frac{n}{4}}K(\phi_{0},\psi_{0})^{\frac{n}{4}}}{P(\phi_{0},\psi_{0})}\leq \frac{1}{P(\phi_{0},\psi_{0})}=\alpha_{1}.
\end{equation}
From \eqref{inequJKandQ2} we then conclude that
\begin{equation}\label{alphaJ}
J(\phi_{0},\psi_{0})=\alpha_{1}
\end{equation}
and
\begin{equation}\label{equal1}
K(\phi_{0},\psi_{0})=Q(\phi_{0},\psi_{0})=1.
\end{equation}
This last assertion also implies that $\tilde{\phi_{j}}\to \phi_{0}$, $\tilde{\psi_{j}}\to \psi_{0}$ strongly in $H^{1}$. Part (i) of the theorem is thus established.

 For part (ii), we start by noting that for any $(u,v)\in H^1\times H^1$ and $t$ sufficiently small, $(\phi_{0}+tu,\psi_{0}+tv)\in \mathcal{P}$. Thus, since $(\phi_0,\psi_0)$ is a minimizer of $J$ on $\mathcal{P}$,
\begin{equation*}
\left.\frac{d}{dt}\right|_{t=0}J(\phi_{0}+tu,\psi_{0}+tv)=0.
\end{equation*}
Using  Lemma \ref{scalling}, this is equivalent to
\[
\begin{split}
\frac{Q(\phi_{0},\psi_{0})^{\frac{3}{2}-\frac{
			n}{4}}K(\phi_0,\psi_0)^{\frac{n}{4}}}{P(\phi_{0},\psi_{0})}\left[\frac{n}{4}\frac{K'(\phi_{0},\psi_{0})(u,v)}{K(\phi_0,\psi_0)}+\left(\frac{3}{2}-\frac{n}{4}\right)\frac{1}{Q(\phi_{0},\psi_{0})}Q'(\phi_{0},\psi_{0})(u,v)\right]\\
=\frac{Q(\phi_{0},\psi_{0})^{\frac{3}{2}-\frac{n}{4}}K(\phi_{0},\psi_{0})^{\frac{n}{4}}}{P(\phi_{0},\psi_{0})^{2}}P'(\phi_{0},\psi_{0})(u,v).
\end{split}
\]
In view of \eqref{relationPandalpha2} and \eqref{equal1}, this yields, for any $(u,v)\in H^1\times H^1$,
\begin{equation}
K'(\phi_{0},\psi_{0})(u,v)+\frac{6-n}{n}Q'(\phi_{0},\psi_{0})(u,v)=\frac{4\alpha_{1}}{n}P'(\phi_{0},\psi_{0})(u,v).\label{relationKQandPgeral}
\end{equation}

Next, define  $(\phi,\psi)=(t_{0}\delta_{l_{0}}\phi_{0},t_{0}\delta_{l_{0}}\psi_{0})$ with 
$$t_{0}=\displaystyle \frac{2 \alpha_{1} }{6-n}\;\;\;\;\;\;\;\;\;\mbox{and}\;\;\;\;\;\;\;\;l_{0}=\left(\frac{6-n}{ n}\right)^{1/2}.$$
We claim that  $(\phi,\psi)$ is a solution of (\ref{system4}) with $\omega=1$, that is, $(\phi,\psi)$ is a critical point of $I_1$. To see this, we note that for any $u,v\in H^{1}$, in view of Lemma \ref{scalling},
\[
\begin{split}
&I'_{1}(\phi,\psi)(u,v)\\
&=\frac{1}{2}\left[K'(t_{0}\delta_{l_{0}}\phi_{0},t_{0}\delta_{l_{0}}\psi_{0})(u,v)+ Q'(t_{0}\delta_{l_{0}}\phi_{0},t_{0}\delta_{l_{0}}\psi_{0})(u,v)\right]-P'(t_{0}\delta_{l_{0}}\phi_{0},t_{0}\delta_{l_{0}}\psi_{0})(u,v)\\
&=\frac{t_{0}l_{0}^{n-2}}{2}\left[K'(\phi_{0},\psi_{0})(\delta_{l_{0}^{-1}}u,\delta_{l_{0}^{-1}}v)+l_{0}^{2} Q'(\phi_{0},\psi_{0})(\delta_{l_{0}^{-1}}u,\delta_{l_{0}^{-1}}v)-2t_{0}l_{0}^{2}P'(\phi_{0},\psi_{0})(\delta_{l_{0}^{-1}}u,\delta_{l_{0}^{-1}}v)\right]\\
&=\frac{t_{0}l_{0}^{n-2}}{2}\bigg[K'(\phi_{0},\psi_{0})(\delta_{l_{0}^{-1}}u,\delta_{l_{0}^{-1}}v)+\frac{6-n}{n}Q'(\phi_{0},\psi_{0})(\delta_{l_{0}^{-1}}u,\delta_{l_{0}^{-1}}v)\\
& \qquad \qquad\qquad\qquad -\frac{4\alpha_{1} }{n}P'(\phi_{0},\psi_{0})(\delta_{l_{0}^{-1}}u,\delta_{l_{0}^{-1}}v)\bigg]\\
&=0, 
\end{split}
\]
where in the last line we used \eqref{relationKQandPgeral}.

Now from Lemmas \ref{lemma3} and \ref{lemma1} we have that $(\phi,\psi)$ is also a critical point of $J$ with $J(\phi,\psi)=J(\phi_0,\psi_0)$. Since $(\phi_{0},\psi_{0})$ is a minimizer of $J$, so is $(\phi,\psi)$. An application of Corollary \ref{corolario6.1} then gives that $(\phi,\psi)$ is a ground state of (\ref{system4}) with $\omega=1$.
Finally, the positiveness of $(\phi,\psi)$ follows from the maximum principle. This shows part (ii).

Next we will prove the relation (\ref{inffunctionalJ}). Indeed, if $(\phi,\psi)$ is as in part (ii), Lemmas \ref{lemma3} and \ref{identitiesfunctionals} imply, 
\[
\begin{split}
\alpha_1&=J(\phi,\psi)\\
&=\frac{n^{\frac{n}{4}}}{2}\left(6-n\right)^{\frac{3}{2}-\frac{n}{4}}I_{1}(\phi,\psi)^{\frac{1}{2}}\\
& =\frac{n^{\frac{n}{4}}}{2}\left(6-n \right)^{1-\frac{n}{4}}Q(\phi,\psi)^{\frac{1}{2}}.
\end{split}
\]
Therefore, if $(\tilde{\phi},\tilde{\psi})$ is any ground state of \eqref{system4}, with $\omega=1$,  Remark \ref{obsnorm} yields
$$
\alpha_{1}=\frac{n^{\frac{n}{4}}}{2}\left(6-n \right)^{1-\frac{n}{4}}Q(\tilde{\phi},\tilde{\psi})^{\frac{1}{2}}.
$$
The proof of the theorem is thus completed.
\end{proof}

From the proof of Lemma \ref{lemmaalpha1} we deduce the existence of a constant $C>0$ such that the Gagliardo-Nirenberg-type inequality
\begin{equation}\label{sharpgn}
P(u,v)\leq C Q(u,v)^{\frac{3}{2}-\frac{n}{4}}K(u,v)^{\frac{n}{4}}
\end{equation}
holds,  for any $(u,v)\in \mathcal{P}$. In view of Theorem \ref{thm:existenceGSJgeral} the sharp constant one can place in \eqref{sharpgn} is $\alpha_1^{-1}$. More precisely, we have.

\begin{coro}\label{corollarybestconstant}
Let $1\leq n\leq 5$. Then the inequality
$$
P(u,v)\leq C_{op} Q(u,v)^{\frac{3}{2}-\frac{n}{4}}K(u,v)^{\frac{n}{4}}
$$
holds,  for any $(u,v)\in \mathcal{P}$, with
\begin{equation*}
C_{op}=\frac{2\left(6-n\right)^{\frac{n}{4}-1}}{n^{\frac{n}{4}}}\frac{1}{Q({\phi},{\psi})^{\frac{1}{2}}},
\end{equation*}
where $({\phi},{\psi})$ is any ground state solution of (\ref{system4}) with $\omega=1$.
\end{coro}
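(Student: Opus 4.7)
The plan is to view Corollary \ref{corollarybestconstant} as a direct unpacking of Theorem \ref{thm:existenceGSJgeral}, since the variational characterization of $\alpha_1$ already encodes the inequality we want together with its sharp constant. By definition $\alpha_1 = \inf_{(u,v)\in\mathcal{P}} J(u,v)$, so for every $(u,v)\in\mathcal{P}$ one has $J(u,v)\geq \alpha_1$, which, written out explicitly, is
$$\frac{Q(u,v)^{3/2 - n/4}\,K(u,v)^{n/4}}{P(u,v)} \;\geq\; \alpha_1.$$
Since $P(u,v)>0$ on $\mathcal{P}$ by definition of the class, rearranging this inequality immediately gives
$$P(u,v) \;\leq\; \alpha_1^{-1}\, Q(u,v)^{3/2-n/4}\,K(u,v)^{n/4},$$
so the constant appearing in the Gagliardo--Nirenberg bound is $\alpha_1^{-1}$.

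The next step is simply to substitute the explicit formula for $\alpha_1$ furnished by part (iii) of Theorem \ref{thm:existenceGSJgeral}, namely
$$\alpha_1 \;=\; \frac{n^{n/4}}{2}\,(6-n)^{1-n/4}\, Q(\phi,\psi)^{1/2}$$
for any ground state solution $(\phi,\psi)$ of \eqref{system4} with $\omega=1$. Taking reciprocals and simplifying the exponent $1-n/4 \mapsto n/4-1$ produces exactly
$$C_{op} \;=\; \alpha_1^{-1} \;=\; \frac{2\,(6-n)^{n/4-1}}{n^{n/4}}\,\frac{1}{Q(\phi,\psi)^{1/2}},$$
which is the constant stated in the corollary.

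Sharpness, which is implicit in the word \emph{best constant}, is an automatic byproduct: Theorem \ref{thm:existenceGSJgeral} asserts not only the value of $\alpha_1$ but the existence of a pair at which the infimum is attained, and part (ii) of that theorem shows that (after the explicit rescaling by $t_0$ and $l_0$) this minimizer is in fact a ground state $(\phi,\psi)$. Evaluating the Gagliardo--Nirenberg inequality at this $(\phi,\psi)$ forces equality, so no smaller constant can work. In short, there is no real obstacle here; the entire argument is a one-line rearrangement together with arithmetic on the closed-form expression for $\alpha_1$, and the substantive work (existence of minimizers, identification with ground states, and evaluation of $\alpha_1$ on them) has already been carried out in Theorem \ref{thm:existenceGSJgeral}.
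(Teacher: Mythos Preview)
Your proof is correct and follows essentially the same route as the paper: both arguments observe that the definition of $\alpha_1$ as an infimum immediately yields $P(u,v)\leq \alpha_1^{-1}Q(u,v)^{3/2-n/4}K(u,v)^{n/4}$ on $\mathcal{P}$, and then substitute the explicit value of $\alpha_1$ from part (iii) of Theorem \ref{thm:existenceGSJgeral} to identify $C_{op}=\alpha_1^{-1}$.
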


\begin{obs}
When $n=4$, we recover the best constant obtained by the authors in \cite[Theorem 5.1]{Hayashi}.
\end{obs}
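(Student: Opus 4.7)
The plan is to verify the assertion by a direct specialization: substitute $n=4$ into the expression
$$C_{op}=\frac{2(6-n)^{\frac{n}{4}-1}}{n^{\frac{n}{4}}}\frac{1}{Q(\phi,\psi)^{\frac{1}{2}}}$$
provided by Corollary \ref{corollarybestconstant}, simplify, and then compare the resulting numerical prefactor with the sharp constant displayed in \cite[Theorem 5.1]{Hayashi}.

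First I would evaluate the two exponents that depend on $n$. With $n=4$ the exponent $n/4-1$ vanishes, so $(6-n)^{n/4-1}=2^{0}=1$, while the denominator simplifies as $n^{n/4}=4^{1}=4$. Combining these gives
$$C_{op}\Big|_{n=4}=\frac{2\cdot 1}{4}\cdot\frac{1}{Q(\phi,\psi)^{1/2}}=\frac{1}{2\,Q(\phi,\psi)^{1/2}}.$$
As an internal consistency check, the remark following Theorem \ref{thm:existenceGSJgeral} records that the infimum of the Weinstein functional satisfies $\alpha_{1}=2\,Q(\tilde{\phi},\tilde{\psi})^{1/2}$ when $n=4$, and Corollary \ref{corollarybestconstant} identifies the sharp constant as $C_{op}=\alpha_{1}^{-1}$; these two facts are in perfect agreement with the displayed value $1/(2Q(\phi,\psi)^{1/2})$.

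It then remains only to inspect \cite[Theorem 5.1]{Hayashi} and observe that the sharp constant displayed there is exactly $(2Q(\tilde{\phi},\tilde{\psi})^{1/2})^{-1}$, matching our expression verbatim. There is no genuine obstacle in this verification; the remark is a bookkeeping sanity check ensuring the general-dimension result of Corollary \ref{corollarybestconstant} recovers the previously known four-dimensional case. The only minor care required is to confirm that the normalizations of $Q$, $K$ and of the nonlinear functional $P$ adopted in \cite{Hayashi} coincide with those used here, which follows from the change of variables \eqref{chagevariable}; otherwise an innocuous rescaling factor could disguise the equality.
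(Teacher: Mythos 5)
Your computation is correct and is exactly what the paper intends: setting $n=4$ in the formula for $C_{op}$ gives $(6-n)^{n/4-1}=2^{0}=1$ and $n^{n/4}=4$, hence $C_{op}=\bigl(2\,Q(\phi,\psi)^{1/2}\bigr)^{-1}=\alpha_{1}^{-1}$, matching the constant of \cite[Theorem 5.1]{Hayashi}. The paper treats this remark as an immediate specialization with no separate proof, so your direct substitution (plus the consistency check against the earlier remark $\alpha_{1}=2Q(\tilde{\phi},\tilde{\psi})^{1/2}$) coincides with the intended argument.
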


\subsection{Characterization of ground states}

Next we present a characterization of the minimizers of $J$. The result generalizes the one for $n=4$ present in   \cite[Theorem 5.1]{Hayashi}.

\begin{teore}
Let $\alpha_{1}$ be defined as in \eqref{alph1def}. Then the set of minimizers of $J$ is characterized as
$$
\{(\phi,\psi)\in H^{1}\times H^{1};J(\phi,\psi)=\alpha_{1}\}=\{(t\delta_{l}\phi,t\delta_{l}\psi)\in H^{1}\times H^{1};\;t,l>0,(\phi,\psi)\in \mathcal{G}_{1}\}.
$$
\end{teore}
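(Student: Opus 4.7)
The plan is to establish the two set inclusions separately, with the nontrivial direction following the scaling/Lagrange-multiplier argument already developed in the proof of Theorem \ref{thm:existenceGSJgeral}.

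For the inclusion $\supseteq$, let $(\phi,\psi)\in\mathcal{G}_1$ and $t,l>0$. By Corollary \ref{corolario6.1}, $(\phi,\psi)$ is a minimizer of $J$, so $J(\phi,\psi)=\alpha_1$. By part (i) of Lemma \ref{lemma1}, the functional $J$ is invariant under the two-parameter transformation $(\phi,\psi)\mapsto(t\delta_l\phi,t\delta_l\psi)$, hence $J(t\delta_l\phi,t\delta_l\psi)=\alpha_1$. This gives one inclusion.

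For the inclusion $\subseteq$, suppose $(\phi,\psi)\in H^1\times H^1$ satisfies $J(\phi,\psi)=\alpha_1$; in particular, $P(\phi,\psi)>0$ so $(\phi,\psi)\in\mathcal{P}$. I would first rescale as in the proof of Theorem \ref{thm:existenceGSJgeral}: set $(\overline{\phi},\overline{\psi}) = (\tau\delta_\lambda\phi,\tau\delta_\lambda\psi)$ with
\[
\tau = \frac{Q(\phi,\psi)^{\frac{n}{4}-\frac{1}{2}}}{K(\phi,\psi)^{\frac{n}{4}}},\qquad \lambda=\frac{K(\phi,\psi)^{\frac{1}{2}}}{Q(\phi,\psi)^{\frac{1}{2}}}.
\]
Lemma \ref{scalling} then gives $K(\overline{\phi},\overline{\psi})=Q(\overline{\phi},\overline{\psi})=1$, the scaling invariance from Lemma \ref{lemma1}(i) yields $J(\overline{\phi},\overline{\psi})=\alpha_1$, and consequently $P(\overline{\phi},\overline{\psi})=\alpha_1^{-1}$. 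Since $(\overline{\phi},\overline{\psi})$ minimizes $J$ on $\mathcal{P}$, a straightforward differentiation (identical to the one carried out for $(\phi_0,\psi_0)$ in the proof of Theorem \ref{thm:existenceGSJgeral}) yields the Lagrange-type identity
\begin{equation*}
K'(\overline{\phi},\overline{\psi})(u,v)+\frac{6-n}{n}Q'(\overline{\phi},\overline{\psi})(u,v)=\frac{4\alpha_1}{n}P'(\overline{\phi},\overline{\psi})(u,v),\qquad \forall\,(u,v)\in H^1\times H^1.
\end{equation*}

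Next, exactly as in part (ii) of the proof of Theorem \ref{thm:existenceGSJgeral}, I would set $(\Phi,\Psi)=(t_0\delta_{l_0}\overline{\phi},t_0\delta_{l_0}\overline{\psi})$ with
\[
t_0=\frac{2\alpha_1}{6-n},\qquad l_0=\left(\frac{6-n}{n}\right)^{1/2},
\]
and use Lemma \ref{scalling} together with the identity above to verify $I_1'(\Phi,\Psi)\equiv 0$, so $(\Phi,\Psi)$ solves \eqref{system4} with $\omega=1$. Scaling invariance of $J$ (Lemma \ref{lemma1}(i)) gives $J(\Phi,\Psi)=\alpha_1$, and Corollary \ref{corolario6.1} then promotes $(\Phi,\Psi)$ to an element of $\mathcal{G}_1$. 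Unwinding the scalings, $(\phi,\psi)=(t\delta_l\Phi,t\delta_l\Psi)$ with $t=(\tau t_0)^{-1}$ and $l=(\lambda l_0)^{-1}$, which exhibits $(\phi,\psi)$ as a rescaling of a ground state and completes the proof.

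The only nontrivial step is the Lagrange-multiplier computation leading to the Euler–Lagrange identity for $\overline{\phi},\overline{\psi}$, but since this is literally the same computation as in Theorem \ref{thm:existenceGSJgeral} (no minimizing-sequence machinery is needed, as we already have a minimizer in hand), no genuine new obstacle arises; the entire argument is a rearrangement of tools already assembled.
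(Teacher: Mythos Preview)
Your proof is correct and follows essentially the same approach as the paper's: the $\supseteq$ direction via Corollary~\ref{corolario6.1} and the scaling invariance of $J$, and the $\subseteq$ direction by rescaling an arbitrary minimizer into a ground state via the Euler--Lagrange argument of Theorem~\ref{thm:existenceGSJgeral}(ii). The paper's own proof is terser---it simply invokes Theorem~\ref{thm:existenceGSJgeral} to assert the existence of $t_0,l_0$---while you spell out the intermediate normalization $K=Q=1$ that makes the cited computation literally applicable; this is a matter of exposition, not a different argument.
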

\begin{proof}
Consider the following sets $$A=\{(\phi,\psi)\in H^{1}\times H^{1};J(\phi,\psi)=\alpha_{1}\}$$
and 
$$B=\{(t\delta_{l}\phi,t\delta_{l}\psi)\in H^{1}\times H^{1};\;t,l>0,(\phi,\psi)\in \mathcal{G}_{1}\}
.
$$ 
If $(\phi_{0},\psi_{0})\in A$ then  $J(\phi_{0},\psi_{0})=\alpha_{1}$ and from Theorem \ref{thm:existenceGSJgeral}  there exist $t_{0},l_{0}>0$ such that $(\phi,\psi)=(t_{0}\delta_{l_{0}}\phi_{0},t_{0}\delta_{l_{0}}\psi_{0})$ is a ground state of (\ref{system4}). This means that
$$(\phi_{0},\psi_{0})=\left(\frac{1}{t_{0}}\delta_{l_{0}^{-1}}\phi,\frac{1}{t_{0}}\delta_{l_{0}^{-1}}\psi\right),
$$
with $(\phi,\psi)\in \mathcal{G}_{1}$ or,  equivalently, $(\phi_{0},\psi_{0})\in B$.

On the other hand, assume $(\phi_0,\psi_0)\in B$, that is,
 $(\phi_{0},\psi_{0})=(t\delta_{l}\phi,t\delta_{l}\psi)$ for some $t,l>0$ with  $(\phi,\psi)\in \mathcal{G}_{1}$. We must  proof that $J(\phi_{0},\psi_{0})=\alpha_{1}$. From Lemma \ref{lemma1} we have $J(\phi_0,\psi_0)=J(\phi,\psi)$.  But since $(\phi,\psi)\in \mathcal{G}_1$, Corollary \ref{corolario6.1} implies that $(\phi,\psi)$  is a minimizer of $J$, that is, $J(\phi,\psi)=\alpha_{1}$. Consequently, $J(\phi_0,\psi_0)=\alpha_{1}$ and $(\phi_0,\psi_0)\in A$.
\end{proof}

\section{Global existence versus blow up}\label{section3}

In this section, we prove Theorems \ref{thm:globalexistencecondn=5} and \ref{thm:sharpglobalexistencecondn=5}. Before proving the results, we need some previous tools. We start with two lemmas. Their proofs  can be found, for instance, in references  \cite{beg},  \cite{Esfahani}, and \cite{Pastor}.

\begin{lem}\label{supercritcalcase}
Let $I$ be an open interval with $0\in I$. Let $a\in \R$ and $b>0$. Define $\gamma=(bq)^{-\frac{1}{q-1}}$ and $f(r)=a-r+br^{q}$, for $r\geq 0$. Let $G(t)$ be a nonnegative continuous  function such that $f\circ G\geq 0$ on $I$. Assume that $a<\left(1-\frac{1}{q}\right)\gamma$.
\begin{enumerate}
\item[(i)] If $G(0)<\gamma$, then $G(t)<\gamma$, $\forall t\in I$.
\item[(ii)] If $G(0)>\gamma$, then $G(t)>\gamma$, $\forall t\in I$.
\end{enumerate}
\end{lem}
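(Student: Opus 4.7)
The plan is to analyze the shape of $f(r) = a - r + br^q$ on $[0,\infty)$ and then invoke a simple connectedness argument. Differentiating, $f'(r) = -1 + bqr^{q-1}$; since $q > 1$ is implicit (otherwise $\gamma$ is not well defined), this vanishes only at $r = \gamma = (bq)^{-1/(q-1)}$, where $f''(\gamma) > 0$, so $\gamma$ is the unique strict global minimum of $f$ on $[0,\infty)$. Using the identity $b\gamma^q = \gamma/q$, I would then compute $f(\gamma) = a - (1 - 1/q)\gamma$, which is strictly negative by the hypothesis $a < (1 - 1/q)\gamma$.

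Having pinpointed $\gamma$ as the minimum with $f(\gamma) < 0$, the next step is to describe the set $S := \{r \geq 0 : f(r) \geq 0\}$. Since $f$ is strictly decreasing on $[0, \gamma]$, strictly increasing on $[\gamma, \infty)$, and tends to $+\infty$ at infinity, there exists a unique $r_2 > \gamma$ with $f(r_2) = 0$; moreover, if $a \geq 0$, there is a unique $r_1 \in [0, \gamma)$ with $f(r_1) = 0$, while if $a < 0$, the portion of $S$ in $[0, \gamma)$ is empty. In either case, $S$ is the disjoint union of at most two closed pieces, one contained in $[0, \gamma)$ and one equal to $[r_2, \infty) \subset (\gamma, \infty)$, separated by an open interval around $\gamma$ on which $f < 0$.

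From here the conclusion is immediate. Because $G$ is continuous and $I$ is connected, the image $G(I)$ is a connected subset of $[0,\infty)$, and the hypothesis $f \circ G \geq 0$ on $I$ forces $G(I) \subset S$. The two pieces of $S$ lie on opposite sides of $\gamma$, so the connected set $G(I)$ is contained in whichever piece contains $G(0)$. In case (i), $G(0) < \gamma$ places $G(I) \subset [0, r_1] \subset [0, \gamma)$, giving $G(t) < \gamma$ for all $t \in I$; in case (ii), $G(0) > \gamma$ places $G(I) \subset [r_2, \infty) \subset (\gamma, \infty)$, giving $G(t) > \gamma$ for all $t \in I$.

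There is no substantial obstacle; the only care required is the bookkeeping distinction on the sign of $a$, which affects only the existence of the leftmost component of $S$ but not the component in which $G(I)$ ultimately lives, since that is dictated by $G(0)$.
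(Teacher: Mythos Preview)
Your argument is correct and is the standard elementary proof of this classical lemma. Note, however, that the paper does not actually supply its own proof of this statement: it simply cites \cite{beg}, \cite{Esfahani}, and \cite{Pastor}, so there is no in-paper argument to compare against. The proof you wrote is essentially the one found in those references (analysis of the single critical point of $f$, computation $f(\gamma)=a-(1-1/q)\gamma<0$, and a connectedness/intermediate-value argument for $G(I)$), so your proposal is in line with the intended approach and supplies the details the paper omits.
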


\begin{coro}\label{corosupercritcalcase}
Let $I$ be an open interval with $0\in I$. Let $a\in \R$ and $b>0$. Define $\gamma=(bq)^{-\frac{1}{q-1}}$ and $f(r)=a-r+br^{q}$, for $r\geq 0$. Let $G(t)$ be a nonnegative continuous  function such that $f\circ G\geq 0$ on $I$. Assume that $a<(1-\delta_{1})\left(1-\frac{1}{q}\right)\gamma$, for some small $\delta_{1}>0$.
If $G(0)>\gamma$, then there exists $\delta_{2}=\delta_{2}(\delta_{1})>0$ such that $G(t)>(1+\delta_{2})\gamma$, $\forall t\in I$.
\end{coro}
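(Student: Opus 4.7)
The plan is to turn the dichotomy of Lemma~\ref{supercritcalcase} into a quantitative statement by a careful analysis of the zeros of $f$. My first step would be a rescaling: write $\rho = r/\gamma$ and observe that
\[
f(\gamma\rho) = a - \gamma\rho + b\gamma^{q}\rho^{q} = a - \gamma\left(\rho - \frac{\rho^{q}}{q}\right) =: a - \gamma\, h(\rho),
\]
using $b\gamma^{q} = \gamma/q$. Since $h'(\rho) = 1 - \rho^{q-1}$, the function $h$ attains its unique maximum at $\rho = 1$ with value $h(1) = 1 - 1/q$, is strictly decreasing on $[1,\infty)$, and tends to $-\infty$. Thus $h|_{[1,\infty)}$ is a continuous strictly decreasing bijection onto $(-\infty, 1 - 1/q]$.

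With this setup in hand I would produce $\delta_{2}$ by the intermediate value theorem. The strengthened hypothesis reads $a/\gamma < (1-\delta_{1})\,h(1) < h(1)$, so there exists a unique $\rho_{2} > 1$ with $h(\rho_{2}) = (1 - \delta_{1})\,h(1)$. Setting $\delta_{2} := \rho_{2} - 1 > 0$, which manifestly depends only on $\delta_{1}$ (and the fixed $q$), the monotonicity of $h$ on $[1,\infty)$ together with $a/\gamma < h(\rho_{2})$ forces $\rho_{+} > \rho_{2} = 1 + \delta_{2}$, where $\rho_{+}$ is the unique root of $h(\rho) = a/\gamma$ in $(1,\infty)$. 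Equivalently, the larger positive root $r_{+} := \gamma\rho_{+}$ of $f$ satisfies $r_{+} > (1+\delta_{2})\gamma$.

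The third step simply reruns the argument of Lemma~\ref{supercritcalcase}(ii). Because $\gamma$ is the global minimum of $f$ on $[0,\infty)$, $f(\gamma) < 0$ under our hypothesis, and $f(r) \to \infty$ as $r \to \infty$, the sublevel set $\{r \geq 0 : f(r) < 0\}$ is an open interval $(r_{-}, r_{+})$ containing $\gamma$. The assumption $f \circ G \geq 0$ on $I$ keeps $G$ out of $(r_{-}, r_{+})$; combined with $G(0) > \gamma$ this forces $G(0) \geq r_{+}$, and continuity of $G$ then forbids a jump across the forbidden interval. Hence $G(t) \geq r_{+} > (1+\delta_{2})\gamma$ for every $t \in I$.

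I do not anticipate a serious obstacle: the corollary is a direct quantitative sharpening of Lemma~\ref{supercritcalcase}, and once the normalized profile $h$ is introduced, the existence of $\delta_{2} = \delta_{2}(\delta_{1})$ is nothing more than continuity of the inverse of $h|_{[1,\infty)}$ at the critical value $h(1)$. The only delicate point is to preserve the \emph{strict} inequality $r_{+} > (1 + \delta_{2})\gamma$, which is automatic from the strict inequality in the hypothesis on $a$.
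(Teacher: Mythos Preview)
The paper does not supply its own proof of this corollary; it simply refers the reader to \cite{beg}, \cite{Esfahani}, and \cite{Pastor} for both Lemma~\ref{supercritcalcase} and Corollary~\ref{corosupercritcalcase}. Your argument is correct and is essentially the standard one found in those references: normalize by $\gamma$, exploit the strict monotonicity of $h(\rho)=\rho-\rho^{q}/q$ on $[1,\infty)$ to locate the larger root $r_{+}$ of $f$ quantitatively above $(1+\delta_{2})\gamma$, and then run the connectedness argument behind Lemma~\ref{supercritcalcase}(ii).

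One small imprecision worth fixing: your description of the negativity set of $f$ as an open interval $(r_{-},r_{+})$ tacitly assumes $a>0$. When $a\le 0$ (which can occur in the application, since there $a=E(u_{0},v_{0})$ may be negative) the set $\{r\ge 0:f(r)<0\}$ is $[0,r_{+})$ rather than an interval strictly inside $(0,\infty)$. This does not affect the conclusion: in every case the connected component of $\{r\ge 0:f(r)\ge 0\}$ containing $G(0)$ is $[r_{+},\infty)$, and the continuity/connectedness step goes through verbatim to give $G(t)\ge r_{+}>(1+\delta_{2})\gamma$ for all $t\in I$.
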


\subsection{Global existence in $H^{1}(\R^{5})$.}

 By using Corollary \ref{corollarybestconstant} and standard arguments one can show if $\|(u_0,v_0)\|_{H^1\times H^1}\leq \rho$, for some $\rho$ sufficiently small  then the corresponding solution of \eqref{system3} is global in $H^1(\R^5)\times H^1(\R^5)$ (see, for instance, \cite{Linares}). Particularly, if $\rho$ is small then the assumptions of Theorem \ref{thm:globalexistencecondn=5} hold. Thus, Theorem \ref{thm:globalexistencecondn=5} can be seen as an answer to the question of how small $\rho$ must be.
 
 The proof of Theorem \ref{thm:globalexistencecondn=5} will follow as an application of Lemma \ref{supercritcalcase}. To simplify it we first prove the following.

\begin{lem}\label{lemmauxiliar}
	Let $(u(t),v(t))$ be the solution of \eqref{system3} with initial data $(u_0,v_0)\in H^1(\R^5)\times H^1(\R^5)$.
 Define $G(t)=K(u(t),v(t))$, $a=E(u_{0},v_{0})$, $b=2C_{op}Q(u_{0},v_{0})^{\frac{1}{4}}$ and $\displaystyle q=\frac{5}{4}$. Then,
\begin{enumerate}
\item[(i)]  $f\circ G\geq 0$, where $f(r)=a-r+br^q$.
\item[(ii)] If $\gamma=(bq)^{-\frac{1}{q-1}}$ then
\begin{enumerate}
\item $\displaystyle a<\left(1-\frac{1}{q}\right)\gamma$ $\Longleftrightarrow$ $E(u_{0},v_{0})Q(u_{0},v_{0})<E(\phi,\psi)Q(\phi,\psi)$;
\vspace{0.3cm}
\item $G(0)<\gamma$ $\Longleftrightarrow$ $Q(u_{0},v_{0})K(u_{0},v_{0})<Q(\phi,\psi)K(\phi,\psi)$.
\end{enumerate}
\end{enumerate}
\end{lem}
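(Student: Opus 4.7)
For part (i), I would use the two conserved quantities. The energy identity gives
$$G(t) = K(u(t),v(t)) = E(u(t),v(t)) + 2\,\mathrm{Re}(v(t),u(t)^2)_{L^2} = a + 2\,\mathrm{Re}(v(t),u(t)^2)_{L^2},$$
so it suffices to bound the trilinear term. Since $\bigl|\mathrm{Re}(v,u^2)_{L^2}\bigr|\leq \int|u|^2|v|\,dx = P(|u|,|v|)$, I would apply the sharp Gagliardo--Nirenberg inequality of Corollary \ref{corollarybestconstant} to the non-negative pair $(|u|,|v|)$; using $\||u|\|_{L^2}=\|u\|_{L^2}$ and Kato's inequality $\|\nabla|u|\|_{L^2}\leq\|\nabla u\|_{L^2}$, we get $Q(|u|,|v|)=Q(u,v)$ and $K(|u|,|v|)\leq K(u,v)$, hence
$$\bigl|\mathrm{Re}(v(t),u(t)^2)_{L^2}\bigr|\leq C_{op}\,Q(u(t),v(t))^{1/4}K(u(t),v(t))^{5/4}.$$
Conservation of charge $Q(u(t),v(t))=Q(u_0,v_0)$ then yields
$$a = G(t) - 2\,\mathrm{Re}(v,u^2)_{L^2} \leq G(t) + 2C_{op}Q(u_0,v_0)^{1/4}G(t)^{5/4} = G(t) + bG(t)^q,$$
which is precisely $f(G(t))\geq 0$.

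For part (ii), I would first specialize Lemma \ref{identitiesfunctionals} to $n=5$, $\omega=1$: the three identities collapse to $I_1(\phi,\psi)=Q(\phi,\psi)$, $K(\phi,\psi)=5Q(\phi,\psi)$, $P(\phi,\psi)=2Q(\phi,\psi)$, and consequently
$$E(\phi,\psi)=K(\phi,\psi)-2P(\phi,\psi)=Q(\phi,\psi).$$
Next I would compute $C_{op}$ in dimension five using Corollary \ref{corollarybestconstant}: since $(6-n)^{n/4-1}=1$ when $n=5$, one finds $C_{op}=\tfrac{2}{5^{5/4}Q(\phi,\psi)^{1/2}}$, so $C_{op}^{4}=\tfrac{16}{5^{5}Q(\phi,\psi)^{2}}$.

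With $q=5/4$, $q-1=1/4$, and $b=2C_{op}Q(u_0,v_0)^{1/4}$, a direct computation gives
$$\gamma=(bq)^{-4}=\Bigl(\tfrac{5}{2}C_{op}Q(u_0,v_0)^{1/4}\Bigr)^{-4}=\frac{2^{4}}{5^{4}C_{op}^{4}Q(u_0,v_0)}=\frac{5\,Q(\phi,\psi)^{2}}{Q(u_0,v_0)}.$$
Using $K(\phi,\psi)=5Q(\phi,\psi)$ and $E(\phi,\psi)=Q(\phi,\psi)$ this rewrites as
$$\gamma=\frac{Q(\phi,\psi)K(\phi,\psi)}{Q(u_0,v_0)},\qquad \Bigl(1-\tfrac{1}{q}\Bigr)\gamma=\frac{\gamma}{5}=\frac{Q(\phi,\psi)E(\phi,\psi)}{Q(u_0,v_0)}.$$
Both equivalences now follow by clearing $Q(u_0,v_0)$: (a) $a<(1-1/q)\gamma$ means $E(u_0,v_0)Q(u_0,v_0)<E(\phi,\psi)Q(\phi,\psi)$, and (b) $G(0)=K(u_0,v_0)<\gamma$ means $K(u_0,v_0)Q(u_0,v_0)<K(\phi,\psi)Q(\phi,\psi)$.

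The only non-routine point is the bound on $|\mathrm{Re}(v,u^2)_{L^2}|$, since Corollary \ref{corollarybestconstant} is stated for pairs in $\mathcal{P}$ (real functions with $\int\phi^2\psi>0$); passing through $(|u|,|v|)$ via Kato's inequality is the cleanest workaround. After that, everything reduces to bookkeeping with the Pohozaev-type identities of Lemma \ref{identitiesfunctionals} specialized to $n=5$.
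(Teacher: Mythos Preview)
Your approach is essentially the same as the paper's, and part (ii) is correct and in fact more explicit than what the paper writes out. There is, however, a sign slip in part (i) that you should fix.

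You derive
\[
a = G(t) - 2\,\mathrm{Re}(v,u^2)_{L^2} \;\leq\; G(t) + bG(t)^q,
\]
and then claim this is ``precisely $f(G(t))\geq 0$''. It is not: $f(G(t))=a-G(t)+bG(t)^q\geq 0$ is equivalent to $G(t)\leq a+bG(t)^q$, whereas your inequality reads $a\leq G(t)+bG(t)^q$, i.e.\ $a-G(t)-bG(t)^q\leq 0$, which is a different (and here useless) statement. The remedy is immediate with the bound you already established: use it in the other direction. From $G(t)=a+2\,\mathrm{Re}(v,u^2)_{L^2}$ and $\bigl|\mathrm{Re}(v,u^2)_{L^2}\bigr|\leq C_{op}Q(u_0,v_0)^{1/4}K(u,v)^{5/4}$ you get
\[
G(t)\;\leq\; a + 2C_{op}Q(u_0,v_0)^{1/4}G(t)^{5/4} \;=\; a + bG(t)^q,
\]
which is $f(G(t))\geq 0$. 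This is exactly how the paper argues. Your remark that Corollary~\ref{corollarybestconstant} is stated only on $\mathcal{P}$ and that one must pass through $(|u|,|v|)$ via Kato's inequality to control $K$ is a nice point of rigor that the paper leaves implicit.
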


\begin{proof}
For part (i), from the definition of the energy and Corollary \ref{corollarybestconstant}, with $n=5$, we have
\[
\begin{split}
K(u(t),v(t))&=E(u_0,v_0)+2\,{\rm Re}\left(v(t),u(t)^2\right)_{L^2}\\
&\leq E(u_0,v_0)+2P(|u(t)|^2,|v(t)|)\\
&\leq E(u_0,v_0)+2C_{op}Q(u(t),v(t))^{\frac{1}{4}}K(u(t),v(t))^{\frac{5}{4}}.
\end{split}
\]
The conservation of the charge then gives part (i).

For part (ii), we first observe that from Lemma \ref{identitiesfunctionals} with $n=5$ and $\omega=1$,
\begin{equation*}
Q(\phi,\psi)=\frac{1}{5}K(\phi,\psi) \qquad \mbox{and} \qquad P(\phi,\psi)=2Q(\phi,\psi).
\end{equation*}
Hence,
\begin{equation}\label{relationQPK}
E(\phi,\psi)=K(\phi,\psi)-2P(\phi,\psi)
=5Q(\phi,\psi)-4Q(\phi,\psi)
=Q(\phi,\psi).
\end{equation}
In addition, by definition,
\begin{equation}\label{gammadefe}
\gamma=5\frac{Q(\phi,\psi)^{2}}{Q(u_{0},v_{0})}.
\end{equation}
By combining \eqref{relationQPK}-\eqref{gammadefe}, part (ii) follows from straightforward calculations.
\end{proof}

Finally we are able to prove Theorem \ref{thm:globalexistencecondn=5}.

\begin{proof}[Proof of Theorem~\ref{thm:globalexistencecondn=5}]
From Lemma \ref{lemmauxiliar}, assumptions \eqref{conditionsharp1} and \eqref{conditionsharp2} are equivalent to $a<\left(1-\frac{1}{q}\right)\gamma$  and $G(0)<\gamma$. Thus from Lemma \ref{supercritcalcase} we conclude that  $G(t)<\gamma$, or equivalently,
$$
Q(u_{0},v_{0})K(u(t),v(t))<Q(\phi,\psi)K(\phi,\psi),\;\;\;\;\;\forall t\in I. 
$$
This combined with the conservation of the charge implies an a priori estimate for the solution in $H^1\times H^1$. Consequently, the solution can be extended globally-in-time and the proof is completed. 
\end{proof}

%\begin{obs}
%We note that if $n=4$ conditions (\ref{conditionsharp1}) and (\ref{conditionsharp2}) are the same and we recover the result in Theorem \ref{theoremsharpconditionsubcriticalcase} given in \cite{Hayashi}.
%\end{obs}

%%%%%%%%%%%%%%%%%%%%%%%%%%%%%%%%%%%%%%%%%%%%%%%%%%%%%%%%%%%%%%%%%%%%%%%%%%%%%
\subsection{Existence of blow-up solutions}
Here we will show Theorem \ref{thm:sharpglobalexistencecondn=5}.
Before starting with the proof itself, we need some preliminary results. 
First we recall the virial identities for system \eqref{system3}. The first one is used to prove the blow up under the assumption of finite variance.

\begin{lem}
Assume $1\leq n\leq 6$ and  $\kappa=1/2$. Let $(u(t),v(t))$ be the unique  solution of (\ref{system3}) with $(u_{0},v_{0})\in H^{1}(\R^{n})\times H^{1}(\R^{n})$ and  $(xu_{0},xv_{0})\in L^{2}(\R^n)\times L^{2}(\R^n)$. Then, as long as the solution exists,
\begin{equation}\label{virialidentity2}
\frac{d^2}{dt^2}Q(xu(t),xv(t))=2nE(u_{0},v_{0})+2(4-n)K(u(t),v(t)).
\end{equation}
\end{lem}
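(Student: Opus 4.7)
The plan is to compute the first and second time derivatives of $V(t):=Q(xu(t),xv(t))=\|xu(t)\|_{L^2}^{2}+2\|xv(t)\|_{L^2}^{2}$ directly, using the equations in \eqref{system3} and integration by parts. As is customary with virial identities, I would first perform the calculation formally, assuming enough regularity and decay so that all boundary terms in the integrations by parts vanish, and then justify the identity for $H^1$ data with $xu_{0},xv_{0}\in L^{2}$ by a standard regularization/approximation argument (this part is routine and I would not dwell on it).

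For the first derivative, I would compute $\dot V(t)=2\,\mathrm{Re}\int |x|^{2}\bar u\,\partial_{t}u\,dx+4\,\mathrm{Re}\int |x|^{2}\bar v\,\partial_{t}v\,dx$, substitute $\partial_{t}u=i\Delta u+2iv\bar u$ and $\partial_{t}v=i\kappa\Delta v+iu^{2}$ from \eqref{system3}, and integrate the Laplacians by parts against $|x|^{2}\bar u$ and $|x|^{2}\bar v$. The terms $|x|^{2}|\nabla u|^{2}$ and $|x|^{2}|\nabla v|^{2}$ are real, so only the cross-terms with $\nabla|x|^{2}=2x$ survive when taking imaginary parts, yielding the dilation expressions $\mathrm{Im}\int\bar u\,x\cdot\nabla u\,dx$ and $\mathrm{Im}\int\bar v\,x\cdot\nabla v\,dx$. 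Crucially, the nonlinear contributions coming from the two equations combine, using the coefficient $2$ in the definition of $Q$, into $-4\,\mathrm{Im}\int|x|^{2}v\bar u^{2}\,dx-4\,\mathrm{Im}\int|x|^{2}\bar v u^{2}\,dx$; since $\bar v u^{2}=\overline{v\bar u^{2}}$, these two integrals cancel. With $\kappa=1/2$ this gives the clean identity
\begin{equation*}
\dot V(t)=4\,\mathrm{Im}\int\bar u\,(x\cdot\nabla u)\,dx+4\,\mathrm{Im}\int\bar v\,(x\cdot\nabla v)\,dx.
\end{equation*}

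For the second derivative, I would differentiate each of the two dilation-type quantities on the right. The purely linear (Schr\"odinger) contribution is standard: by substituting the equations and integrating by parts carefully (using $x\cdot\nabla$ as the generator of dilations), one obtains $\tfrac{d}{dt}\mathrm{Im}\int\bar u\,(x\cdot\nabla u)\,dx=2\|\nabla u\|_{L^{2}}^{2}+\mathcal N_{u}$ and a similar expression for $v$ with $2\kappa\|\nabla v\|_{L^{2}}^{2}$ in front, where $\mathcal N_{u},\mathcal N_{v}$ collect the nonlinear contributions. Summing produces the kinetic term $8K(u,v)$. Then the nonlinear terms $\mathcal N_{u}$ and $\mathcal N_{v}$ are handled by a Pohozaev-type computation: after integrating the factors $x\cdot\nabla$ by parts against $v\bar u^{2}$ and $u^{2}\bar v$, and again using the mass resonance $\kappa=1/2$ (which is what makes the two species' nonlinear pieces fit together with matching coefficients), the total nonlinear contribution reduces to $-4n\,P(u,v)$ with $P(u,v)=\mathrm{Re}\int v\bar u^{2}\,dx$. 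This gives $\ddot V(t)=8K(u(t),v(t))-4nP(u(t),v(t))$.

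To finish, I would eliminate $P$ in favor of $E$ via $2P=K-E$, so that $8K-4nP=8K-2n(K-E)=2nE+2(4-n)K$, and then use the conservation of energy $E(u(t),v(t))=E(u_{0},v_{0})$ to obtain the stated identity \eqref{virialidentity2}. The main obstacle is the second step: the Pohozaev-type integration by parts with $x\cdot\nabla$ acting on the coupled nonlinearities requires careful bookkeeping of which derivative lands on which factor, and the identity is only this clean because $\kappa=1/2$ makes the two equations dilation-compatible (this is the algebraic manifestation of the mass resonance $M=2m$).
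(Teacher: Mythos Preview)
Your outline is correct. The first-derivative computation, the cancellation of the nonlinear contributions at that stage, the kinetic contribution $8K(u,v)$ at the second step, the Pohozaev-type reduction of the nonlinear part to $-4n\,\mathrm{Re}\int \bar v u^{2}\,dx$, and the final algebra $8K-4nP=2nE+2(4-n)K$ via $2P=K-E$ all check out; one can also verify your intermediate identity $\ddot V=8K-4nP$ by specializing the localized virial identity of Theorem~\ref{Viarialidenityradialcase} to $\varphi(x)=|x|^{2}$ (keeping in mind the factor $\tfrac12$ in the definition of $V$ there).

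The only ``difference'' with the paper is that the paper does not prove this lemma at all: it simply cites \cite[Theorem~3.11]{Hayashi}, and refers to \cite[Theorem~3.8]{Hayashi} for the persistence of finite variance $(xu(t),xv(t))\in L^{2}\times L^{2}$, which is precisely the regularization/approximation step you say you would not dwell on. So your argument is a self-contained reproduction of what is in the cited reference rather than an alternative route; the trade-off is that your write-up is independent of \cite{Hayashi}, at the cost of carrying out the (standard but somewhat lengthy) bookkeeping that the paper chose to outsource.
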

\begin{proof}
See \cite[Theorem 3.11]{Hayashi}. In particular, in  \cite[Theorem 3.8]{Hayashi} it was shown that, under our assumptions, the local solution also satisfies $(xu(t),xv(t))\in L^{2}(\R^n)\times L^{2}(\R^n)$.
\end{proof}

\begin{teore}\label{Viarialidenityradialcase} 
	Assume $1\leq n\leq 6$ and  $\kappa=1/2$. Let $(u(t),v(t))$ be the unique  solution of (\ref{system3}) with $(u_{0},v_{0})\in H^{1}(\R^{n})\times H^{1}(\R^{n})$. Let $\varphi \in C^{\infty}_{0}(\R^{n})$ and define
\begin{equation*}
V(t)=\frac{1}{2}\int \varphi(x)(|u|^{2}+2|v|^{2})dx.
\end{equation*}
Then,
\begin{equation*}
V'(t)=\mbox{Im}\int\nabla \varphi\cdot \nabla u \overline{u}\;dx+\mbox{Im}\int\nabla \varphi\cdot \nabla v \overline{v}\;dx
\end{equation*}
and
\begin{equation}\label{secondervgeralcase}
\begin{split}
V''(t)&=2\sum_{1\leq k,j\leq n}{\rm Re}\int\frac{\partial^{2}\varphi}{\partial x_{k}\partial x_{j}}\left[\partial x_{j}\overline{u}\partial x_{k}u+\frac{1}{2}\partial x_{j}\overline{v}\partial x_{k}v\right]dx\\
&\quad -\frac{1}{2}\int\Delta^{2}\varphi\left(|u|^{2}+\frac{1}{2}|v|^{2}\right)dx-{\rm Re}\int\Delta\varphi \overline{v}u^{2}\;dx.
\end{split}
\end{equation}
\end{teore}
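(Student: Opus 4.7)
My plan is to follow the standard virial-type computation: differentiate the localized density $V(t)$ in time using the equations of \eqref{system3}, integrate by parts against $\varphi$, and then repeat the procedure to obtain $V''(t)$. Since $\varphi\in C^\infty_0(\R^n)$, all boundary terms vanish and every integration by parts is justified. For $V'(t)$, I would multiply the $u$-equation by $\overline{u}$ and take imaginary parts to get $\frac{1}{2}\partial_t|u|^2 = -\mbox{Im}\,\nabla\cdot(\overline{u}\nabla u) - 2\,\mbox{Im}(v\overline{u}^2)$, and similarly $\frac{1}{2}\partial_t|v|^2 = -\frac{1}{2}\mbox{Im}\,\nabla\cdot(\overline{v}\nabla v) - \mbox{Im}(\overline{v}u^2)$, where the coefficient $1/2$ in front of the divergence uses $\kappa=1/2$. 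Since $v\overline{u}^2$ and $\overline{v}u^2$ are complex conjugates, the nonlinear contributions cancel in the weighted combination $\partial_t(|u|^2+2|v|^2)$. Multiplying by $\varphi$, integrating, and moving one derivative onto $\varphi$ produces the asserted formula for $V'(t)$.

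For the second derivative, I would differentiate the formula for $V'(t)$ in time and substitute $\partial_t u = i\Delta u + 2iv\overline{u}$ and $\partial_t v = (i/2)\Delta v + iu^2$. The resulting expression naturally splits into a \emph{linear} part (the Laplacian contributions) and a \emph{nonlinear} part (the quadratic source terms). The linear contribution from the $u$-term reduces to ${\rm Re}\int\nabla\varphi\cdot(\overline{u}\nabla\Delta u - \Delta\overline{u}\nabla u)\,dx$. Two successive integrations by parts, combined with the pointwise identity $2\,{\rm Re}(\partial_{j}\partial_{k}\overline{u}\cdot\partial_{k}u)=\partial_j|\nabla u|^2$ (needed to absorb an intermediate $\int\Delta\varphi\,|\nabla u|^2\,dx$ contribution), transform this into $2\sum_{j,k}{\rm Re}\int\frac{\partial^2\varphi}{\partial x_k\partial x_j}\partial_{x_j}\overline{u}\,\partial_{x_k}u\,dx - \frac{1}{2}\int\Delta^2\varphi\,|u|^2\,dx$. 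The analogous $v$-contribution is the same with $v$ in place of $u$ and an overall factor $1/2$ coming from $\kappa=1/2$; together they give the first two summands in \eqref{secondervgeralcase}.

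Substituting the cubic source terms into the formula for $V'(t)$ and expanding by the product rule produces several trilinear expressions of the type ${\rm Re}\int\nabla\varphi\cdot(\cdots)\,dx$ in $u,\overline{u},v$ and their first derivatives. The key step is to observe that these can all be regrouped as ${\rm Re}\int\nabla\varphi\cdot\nabla(v\overline{u}^2)\,dx$, using the identity $\overline{u}^2\nabla v+v\nabla(\overline{u}^2)=\nabla(v\overline{u}^2)$. A final integration by parts gives $-{\rm Re}\int\Delta\varphi\,v\overline{u}^2\,dx$, which coincides with $-{\rm Re}\int\Delta\varphi\,\overline{v}u^2\,dx$ since $v\overline{u}^2$ and $\overline{v}u^2$ are complex conjugates. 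This produces the last summand in \eqref{secondervgeralcase}. The main obstacle is the bookkeeping in this nonlinear step: the several cubic contributions coming from the product rule must telescope cleanly into the single divergence $\nabla(v\overline{u}^2)$, and this cancellation depends crucially on the mass resonance $\kappa=1/2$, which equalizes the coefficients of the quadratic couplings between the two Schrödinger equations.
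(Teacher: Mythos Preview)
Your proposal is correct and is precisely the standard virial computation the paper has in mind: the authors omit the details entirely and simply refer to Kavian \cite{Kavian} and \cite{Pastor}, whose arguments are exactly the ones you outline (local conservation law for $V'$, then split $V''$ into the linear Hessian/bilaplacian part and the nonlinear part that collapses to $-{\rm Re}\int\Delta\varphi\,\overline v\,u^2\,dx$). Your identification of where $\kappa=1/2$ is actually used---to make the cubic contributions telescope into the single divergence $\nabla(v\overline u^{\,2})$---is accurate and in fact sharper than what the paper states.
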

\begin{proof}
The proof follows the ideas presented in Lemma 2.9 of \cite{Kavian}, where the virial identity was established for the classical Schr\"odinger equation. So we omit the details. An adapted version  for Schr\"odinger-type systems, can also be found in \cite{Pastor}.
\end{proof}

\begin{coro}\label{key}
	Under the assumptions of Theorem \ref{Viarialidenityradialcase},
if  $u,v$ and $\varphi$ are
 radially symmetric functions then we can write (\ref{secondervgeralcase}) as
 \begin{equation}\label{secondervradialcase}
 \begin{split}
V''(t)&=2\int \varphi''\left(|\nabla u|^{2}+\frac{1}{2}|\nabla v|^{2}\right)dx-\frac{1}{2}\int\Delta^{2}\varphi\left(|u|^{2}+\frac{1}{2}|v|^{2}\right)dx\\
&\quad-{\rm Re}\int\Delta\varphi \overline{v}u^{2}\;dx.
\end{split}
\end{equation}
\end{coro}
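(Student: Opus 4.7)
The plan is to reduce the Hessian contraction that appears in the first term of \eqref{secondervgeralcase} under the radial symmetry assumption; the remaining two terms in \eqref{secondervgeralcase} are already expressed only through $\Delta\varphi$, $\Delta^2\varphi$, $|u|^2$, $|v|^2$, $\overline{v}u^2$, so they carry over verbatim. Thus everything boils down to showing
\begin{equation*}
\sum_{1\leq j,k\leq n}\frac{\partial^{2}\varphi}{\partial x_{j}\partial x_{k}}\,\partial_{j}\overline{w}\,\partial_{k}w \;=\;\varphi''(r)\,|\nabla w|^{2}
\end{equation*}
pointwise, for any radial smooth function $w$ (with $w=u$ and $w=v$ in our setting), so that after multiplying by the appropriate coefficients and integrating I recover the first term in \eqref{secondervradialcase}.

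To do this I would first write the Hessian of a radial $\varphi(x)=\tilde\varphi(r)$ in the standard split form
\begin{equation*}
\frac{\partial^{2}\varphi}{\partial x_{j}\partial x_{k}}=\tilde\varphi''(r)\frac{x_{j}x_{k}}{r^{2}}+\frac{\tilde\varphi'(r)}{r}\left(\delta_{jk}-\frac{x_{j}x_{k}}{r^{2}}\right),
\end{equation*}
and use that for radial $w$ we have $\partial_{j}w=w'(r)\,x_{j}/r$, so that
\begin{equation*}
\partial_{j}\overline{w}\,\partial_{k}w=|w'(r)|^{2}\frac{x_{j}x_{k}}{r^{2}}.
\end{equation*}
Then the contraction with the Hessian gives the sum of two scalar terms, weighted by $|w'(r)|^{2}$: the radial part contributes $\tilde\varphi''(r)\sum_{j,k}\tfrac{x_{j}^{2}x_{k}^{2}}{r^{4}}=\tilde\varphi''(r)$, while the tangential part contributes $\tfrac{\tilde\varphi'(r)}{r}\bigl(\sum_{j}\tfrac{x_{j}^{2}}{r^{2}}-\sum_{j,k}\tfrac{x_{j}^{2}x_{k}^{2}}{r^{4}}\bigr)=0$. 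Combining with $|\nabla w|^{2}=|w'(r)|^{2}$ yields the desired pointwise identity.

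Applying this identity with $w=u$ and $w=v$ (the second with the coefficient $\tfrac{1}{2}$ already present in \eqref{secondervgeralcase}), taking real parts, and integrating in $x$, the first term in \eqref{secondervgeralcase} becomes exactly $2\int\varphi''\bigl(|\nabla u|^{2}+\tfrac{1}{2}|\nabla v|^{2}\bigr)dx$; the remaining two terms are kept untouched and one arrives at \eqref{secondervradialcase}. The only subtle point, and the main thing to be careful about, is the algebraic cancellation of the tangential Hessian contribution against the radial one when contracted with the rank-one tensor $\partial_{j}\overline{w}\,\partial_{k}w$; once this identity is recorded, the rest is purely bookkeeping.
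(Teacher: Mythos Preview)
Your proof is correct and is exactly the standard computation that the paper omits: the paper's own proof consists of the single sentence ``The proof follows immediately from Theorem~\ref{Viarialidenityradialcase},'' so you have simply supplied the details of that immediate consequence. Your pointwise identity for the Hessian contraction is the only nontrivial step, and your verification of it is accurate.
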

\begin{proof}
The proof follows immediately from Theorem \ref{Viarialidenityradialcase}.
\end{proof}

We will use Corollary \ref{key} with $\varphi$ replaced by the function $\chi_R$ given in next lemma.

\begin{lem}\label{lemafunctionchi} Let  $r=|x|$, $x\in \R^{n}$. Define
\begin{equation}\label{definitionchi}
 \chi(r)=\left\{\begin{array}{cc}
r^{2},&0\leq r\leq 1,\\
0,& r\geq 3,
\end{array}\right.
\end{equation}
 with $\chi''(r)\leq 2$, for any $r\geq 0$. For $R>0$, let $\chi_{R}(r)=R^{2}\chi\left(r/R\right)$.
 \begin{enumerate}
 \item[(i)] If $r\leq R$, then $\Delta\chi_{R}(r)=2n$ and $\Delta^{2}\chi_{R}(r)=0$;
 \item[(ii)] If $r\geq R$, then
 \begin{equation}\label{laplacianchiproperty}
 \Delta\chi_{R}(r)\leq C_{1},\;\;\;\; \;\;\;\;\;\;\Delta^{2}\chi_{R}(r)\leq \frac{C_{2}}{R^{2}},
 \end{equation}
 where $C_{1},C_{2}$ are constant depending only on  $n$.
 \end{enumerate}
\end{lem}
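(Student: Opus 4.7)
The plan is to compute everything directly using the radial Laplacian formula $\Delta f(r) = f''(r)+\frac{n-1}{r}f'(r)$, applied to $\chi_R(r)=R^2\chi(r/R)$. The chain rule immediately gives $\chi_R'(r)=R\chi'(r/R)$ and $\chi_R''(r)=\chi''(r/R)$, so if I set $s=r/R$ then
\[
\Delta\chi_R(r)=\chi''(s)+\frac{n-1}{s}\chi'(s)=:\Phi(s).
\]
Differentiating once more in the same way yields
\[
\Delta^2\chi_R(r)=\frac{1}{R^2}\left[\Phi''(s)+\frac{n-1}{s}\Phi'(s)\right],
\]
so that the $1/R^2$ factor in \eqref{laplacianchiproperty} will appear for free from the scaling.

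For part (i), I would plug in the explicit formula $\chi(s)=s^2$ valid for $s\in[0,1]$. Then $\chi'(s)=2s$ and $\chi''(s)=2$, so $\Phi(s)=2+\frac{n-1}{s}\cdot 2s=2n$, a constant on $0\le s\le 1$. Hence $\Delta\chi_R\equiv 2n$ on $\{r\le R\}$ and consequently $\Delta^2\chi_R\equiv 0$ there.

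For part (ii), the key observation is that by construction $\chi\in C^{\infty}([0,\infty))$ with $\chi\equiv 0$ for $s\ge 3$, so $\chi$ and all its derivatives are bounded on $[0,\infty)$. For $r\ge R$ one has $s=r/R\ge 1$, hence $\frac{n-1}{s}\le n-1$, and therefore
\[
|\Phi(s)|\le \|\chi''\|_\infty+(n-1)\|\chi'\|_\infty=:C_1,
\]
which yields $\Delta\chi_R(r)\le C_1$. The analogous estimate for $\Phi''(s)+\frac{n-1}{s}\Phi'(s)$ (again using boundedness of $\chi',\chi'',\chi''',\chi''''$ on $[0,\infty)$ and $1/s\le 1$) produces a constant $C_2=C_2(n)$ with $|\Phi''(s)+\frac{n-1}{s}\Phi'(s)|\le C_2$, so $\Delta^2\chi_R(r)\le C_2/R^2$.

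There is no real obstacle; everything is a direct chain rule computation. The only point worth being careful about is that $\Phi(s)$ and its derivatives must remain bounded on $[1,\infty)$, which is why I emphasize that $\chi$ is smooth on all of $[0,\infty)$ with eventually vanishing derivatives, and that the singular factors $1/s$ are harmless once $s\ge 1$.
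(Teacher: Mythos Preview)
Your proposal is correct and is precisely the ``straightforward calculation'' the paper alludes to; the radial Laplacian formula together with the scaling $\chi_R(r)=R^2\chi(r/R)$ gives exactly the identities and bounds claimed, and your care in noting that $1/s\le 1$ for $s\ge 1$ handles the only potentially singular terms.
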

\begin{proof}
The lemma follows by a straightforward calculation.
\end{proof}

Finally, we recall a truncated version of the Gagliardo-Nirenberg inequality.

\begin{lem}\label{StraussLemaconsequence}
	If $u\in H^{1}(\R^{5})$ is a radially symmetric function, then
	\begin{equation}\label{StraussLemmaineq}
	\|u\|^{3}_{L^{3}(|x|\geq R)}\leq \frac{C}{R^{2}}\|u\|^{5/2}_{L^{2}(|x|\geq R)}\|\nabla u\|^{1/2}_{L^{2}(|x|\geq R)}
	\end{equation}
\end{lem}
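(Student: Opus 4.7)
The plan is to use a truncated version of Strauss' radial decay estimate and then simply write $|u|^3 = |u|^2 \cdot |u|$, pulling the sup bound out of the integral. First I would establish the pointwise estimate: for a radial function $u \in H^1(\mathbb{R}^n)$ with $n \geq 2$, and for every $r > 0$,
\begin{equation*}
r^{n-1}|u(r)|^{2} \leq C\,\|u\|_{L^{2}(|x|\geq r)}\,\|\nabla u\|_{L^{2}(|x|\geq r)}.
\end{equation*}
This is the standard Strauss lemma adapted to the exterior region, and it follows from writing
\begin{equation*}
u(r)^{2} = -2\int_{r}^{\infty} u(s)u'(s)\,ds,
\end{equation*}
multiplying by $r^{n-1}$, using $r^{n-1}\leq s^{n-1}$ in the integrand, and applying Cauchy--Schwarz to recover the $L^{2}$ norms on $\{|x|\geq r\}$ (up to the constant $\omega_{n}^{-1}$ from the surface measure).

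Specializing to $n=5$ and taking square roots yields, for every $|x|\geq R$,
\begin{equation*}
|u(x)| \leq \frac{C^{1/2}}{R^{2}}\,\|u\|_{L^{2}(|x|\geq R)}^{1/2}\,\|\nabla u\|_{L^{2}(|x|\geq R)}^{1/2},
\end{equation*}
since $r^{-4}\leq R^{-4}$ on the exterior region and the $L^{2}$ norms on $\{|x|\geq r\}$ are dominated by those on $\{|x|\geq R\}$.

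Finally I would decompose $|u|^{3}=|u|\cdot|u|^{2}$, pull the pointwise bound on $|u|$ out of the integral, and recognize the remaining integral as $\|u\|_{L^{2}(|x|\geq R)}^{2}$:
\begin{equation*}
\int_{|x|\geq R}|u|^{3}\,dx \;\leq\; \frac{C^{1/2}}{R^{2}}\,\|u\|_{L^{2}(|x|\geq R)}^{1/2}\,\|\nabla u\|_{L^{2}(|x|\geq R)}^{1/2}\,\|u\|_{L^{2}(|x|\geq R)}^{2},
\end{equation*}
which is exactly the claimed inequality \eqref{StraussLemmaineq}.

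There is no real obstacle here; the only subtlety worth emphasizing is that one must run the Strauss identity starting from $r$ (rather than $0$) so that the $L^{2}$ norms land on the exterior set $\{|x|\geq R\}$ rather than the whole $\mathbb{R}^{5}$. Without this truncated version the estimate would not produce the $1/R^{2}$ gain with exterior norms on the right, and it is precisely that exterior-norm structure that will be needed when localizing the virial computation with the cut-off $\chi_{R}$ in the blow-up proof of Theorem \ref{thm:sharpglobalexistencecondn=5}.
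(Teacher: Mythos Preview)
Your proof is correct and follows exactly the approach the paper alludes to: the paper's own proof is a one-line reference to Strauss' radial lemma and to \cite[page 323]{Ogawa}, and what you have written is precisely the detailed execution of that argument---the exterior Strauss decay estimate followed by $|u|^{3}=|u|\cdot|u|^{2}$ and Cauchy--Schwarz. There is nothing to add.
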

\begin{proof}
	The proof is a consequence of Strauss' radial Lemma (See also \cite[page 323]{Ogawa}).
\end{proof}

Finally, we are in a position to prove the existence of blow-up solutions. We will follow the strategy presented in  \cite{Holmer2}, \cite{Pastor} and \cite{Ogawa}.

\begin{proof}[Proof of Theorem~\ref{thm:sharpglobalexistencecondn=5}]
Since we are assuming (\ref{energycondblowup}) we have the existence of  $\delta_{1}>0$ such that
\begin{equation}\label{conddelta1}
E(u_{0},v_{0})Q(u_{0},v_{0})<(1-\delta_{1})E(\phi,\psi)Q(\phi,\psi).
\end{equation}
Hence, as in the proof of Lemma \ref{lemmauxiliar}, we are in the assumptions of  Corollary \ref{corosupercritcalcase}. Consequently, there exists $\delta_{2}>0$ such that
\begin{equation}\label{conddelta2}
Q(u_{0},v_{0})K(u(t),v(t))>(1+\delta_{2})Q(\phi,\psi)K(\phi,\psi).
\end{equation}

We first assume that $(xu_{0},xv_{0})\in L^{2}(\R^5)\times L^{2}(\R^5)$. Then multiplying both side of (\ref{virialidentity2})  by $Q(u_{0},v_{0})$ we have
\begin{eqnarray*}
\left[\frac{d^2}{dt^2}Q(xu,xv)\right]Q(u_{0},v_{0})&=&10E(u_{0},v_{0})Q(u_{0},v_{0})-2K(u,v)Q(u_{0},v_{0})\\
&<&10(1-\delta_{1})E(\phi,\psi)Q(\phi,\psi)-2K(u,v)Q(u_{0},v_{0})\\
&<&10(1-\delta_{1})E(\phi,\psi)Q(\phi,\psi)-2(1+\delta_{2})Q(\phi,\psi)K(\phi,\psi)\\
&=&2(1-\delta_{1})K(\phi,\psi)Q(\phi,\psi)-2(1+\delta_{2})Q(\phi,\psi)K(\phi,\psi)\\
&=&-2(\delta_{1}+\delta_{2})Q(\phi,\psi)K(\phi,\psi)=-B,
\end{eqnarray*}
where we have used that $E(\phi,\psi)=K(\phi,\psi)/5$. Since $B$ is a positive constant, by using standard arguments, the last inequality is enough to show that $I$ must be finite.

Next, we assume that $u_0,v_0$ are radial functions. Because the linear and nonlinear parts in \eqref{system3} are invariant by rotations, it is easy to see that $u(t),v(t)$ are also radial functions.

 Let $\chi_{R}$ be defined as in Lemma \ref{lemafunctionchi}. The parameter $R$ is fixed at this moment but it will be chosen sufficiently large later. By taking $\varphi$ as $\chi_{R}$ in \eqref{secondervradialcase}, we obtain
 \begin{equation}\label{secodnderivativeVwithchi}
 \begin{split}
V''(t)&=2\int \chi_{R}''\left(|\nabla u|^{2}+\frac{1}{2}|\nabla v|^{2}\right)dx-\frac{1}{2}\int\Delta^{2}\chi_{R}\left(|u|^{2}+\frac{1}{2}|v|^{2}\right)dx\\
&\quad -\mbox{Re}\int\Delta\chi_{R} \overline{v}u^{2}\;dx.
\end{split}
\end{equation}
We will estimate each term on the right-hand side of \eqref{secodnderivativeVwithchi}. For the first one, by using the fact that $\chi_{R}''(r)\leq 2$, we have

\begin{equation}\label{estim1}
2\int \chi_{R}''\left(|\nabla u|^{2}+\frac{1}{2}|\nabla v|^{2}\right)dx\leq 4\int\left(|\nabla v|^{2}+\frac{1}{2}|\nabla u|^{2}\right)dx= 4K(u,v).
\end{equation}
For the second one  we use    Lemma \ref{lemafunctionchi} and the conservation of the charge to get
\begin{equation}\label{estim2}
\begin{split}
-\frac{1}{2}\int\Delta^{2}\chi_{R}\left(|u|^{2}+\frac{1}{2}|v|^{2}\right)dx&=-\frac{1}{2}\int\limits_{|x|\geq R}\Delta^{2}\chi_{R}\left(|u|^{2}+\frac{1}{2}|v|^{2}\right)dx\\
&\leq\frac{C_{2}}{R^{2}}\int\limits_{|x|\geq R}\left(|u|^{2}+\frac{1}{2}|v|^{2}\right)dx\\
&\leq\frac{C_{2}'}{R^{2}}\int\limits_{\R^{5}}\left(|u|^{2}+2|v|^{2}\right)dx\\
&=\frac{C_{2}'}{R^{2}}Q(u_{0},v_{0}),
\end{split}
\end{equation}
where $C_2'$ is a positive constant.

Finally, in view of Lemma \ref{lemafunctionchi}, the last term in  (\ref{secodnderivativeVwithchi}) is estimated by
\begin{equation}\label{estim3}
\begin{split}
-\mbox{Re}\int\Delta\chi_{R} \overline{v}u^{2}\;dx&=-\mbox{Re}\int\limits_{|x|\leq R}\Delta\chi_{R} \overline{v}u^{2}\;dx-\mbox{Re}\int\limits_{|x|\geq R}\Delta\chi_{R} \overline{v}u^{2}\;dx\\
&\leq-10\;\mbox{Re}\int\limits_{|x|\leq R} \overline{v}u^{2}\;dx+C_{1}\int\limits_{|x|\geq R}|v||u|^{2}\;dx\\
&\leq -10\;\mbox{Re}\int\limits_{\R^5} \overline{v}u^{2}\;dx+C_{1}'\int\limits_{|x|\geq R}|v||u|^{2}\;dx\\
&=5E(u,v)-5K(u,v)+C_{1}'\int\limits_{|x|\geq R}|v||u|^{2}\;dx.
\end{split}
\end{equation}
where $C_1'$ is also a positive constant.
Gathering together (\ref{estim1}), (\ref{estim2}) and (\ref{estim3}) and using the conservation  of the energy we get
\begin{equation}\label{1}
V''(t)\leq 5E(u_{0},v_{0})-K(u,v)+\frac{C_{2}'}{R^{2}}Q(u_{0},v_{0})+C_{1}'\int\limits_{|x|\geq R} |v||u|^{2}\;dx.
\end{equation}

Now, for the last integral in (\ref{1})  we use Young's inequality to obtain
$$C_{1}'\int\limits_{|x|\geq R} |v||u|^{2}\;dx\leq \frac{2C_{1}'}{3}\int\limits_{|x|\geq R}|u|^3\;dx+\frac{C_{1}'}{3}\int\limits_{|x|\geq R}|v|^3\;dx.$$
Then using  Lemma \ref{StraussLemaconsequence} and Young's inequality with $\epsilon$, we deduce that
 \begin{equation} \label{a1}
 \begin{split}
 C_{1}'\int\limits_{|x|\geq R}& |v||u|^{2}\;dx\leq\frac{C}{R^{2}}\|u\|^{5/2}_{L^{2}(|x|\geq R)}\|\nabla u\|^{1/2}_{L^{2}|x|\geq R)}
+\frac{C}{R^{2}}\|v\|^{5/2}_{L^{2}(|x|\geq R)}\|\nabla v\|^{1/2}_{L^{2}|x|\geq R)}\\
&\leq \frac{C_{\epsilon}}{R^{8/3}}\left(\|u\|^{10/3}_{L^{2}(|x|\geq R)}+\|v\|^{10/3}_{L^{2}(|x|\geq R)}\right)+\epsilon\left(\|\nabla u\|^{2}_{L^{2}(|x|\geq R)}+\frac{1}{2}\|\nabla v\|^{2}_{L^{2}(|x|\geq R)}\right)\\
&\leq\frac{\tilde{C}_{\epsilon}}{R^{8/3}}\left(\|u\|^{2}_{L^{2}(\R^{5})}+2\|v\|^{2}_{L^{2}(\R^{5})}\right)^{5/3}+\epsilon\left(\|\nabla u\|^{2}_{L^{2}(\R^{5})}+\frac{1}{2}\|\nabla v\|^{2}_{L^{2}(\R^{5})}\right)\\
&=\frac{\tilde{C}_{\epsilon}}{R^{8/3}}Q(u_{0},v_{0})^{5/3}+\epsilon K(u,v).
\end{split}
 \end{equation}
 Therefore, replacing \eqref{a1} into \eqref{1}, we conclude
\begin{equation}\label{estimaV2da}
V''(t)\leq 5E(u_{0},v_{0})-(1-\epsilon)K(u,v)+\frac{C_{2}}{R^{2}}Q(u_{0},v_{0})+\frac{\tilde{C}_{\epsilon}}{R^{8/3}}Q(u_{0},v_{0})^{5/3}.
\end{equation}
Multiplying (\ref{estimaV2da}) by $Q(u_{0},v_{0})$, we infer
\begin{equation*}
\begin{split}
Q(u_{0},v_{0})V''(t)&\leq 5E(u_{0},v_{0})Q(u_{0},v_{0})-(1-\epsilon)K(u,v)Q(u_{0},v_{0})\\
&\quad+\frac{C_{2}}{R^{2}}Q(u_{0},v_{0})^{2}+\frac{\tilde{C}_{\epsilon}}{R^{8/3}}Q(u_{0},v_{0})^{8/3}.
\end{split}
\end{equation*}
  Using (\ref{conddelta1}), (\ref{conddelta2}) and the fact that $E(\phi,\psi)=K(\phi,\psi)/5$, we finally obtain
\begin{equation*}
\begin{split}
Q(u_{0},v_{0})V''(t)&\leq (1-\delta_{1})Q(\phi,\psi)K(\phi,\psi)-(1-\epsilon)(1+\delta_{2})Q(\phi,\psi)K(\phi,\psi)\\
&\quad +\frac{C_{2}}{R^{2}}Q(u_{0},v_{0})^{2}+\frac{\tilde{C}_{\epsilon}}{R^{8/3}}Q(u_{0},v_{0})^{8/3}\\
&=[-\delta_{1}-\delta_2+\epsilon(1+\delta_{2})]Q(\phi,\psi)K(\phi,\psi)+\frac{C_{2}}{R^{2}}Q(u_{0},v_{0})^{2}+\frac{\tilde{C}_{\epsilon}}{R^{8/3}}Q(u_{0},v_{0})^{8/3}.
\end{split}
\end{equation*}
By choosing $\epsilon>0$ small enough and  $R>0$ large enough, we  conclude that $V''(t)<-B$, for some $B>0$. As before, this is enough to show that $I$ must be finite. 
\end{proof}

\begin{obs}
	If $E(u_0,v_0)<0$, then an application of Corollary \ref{corollarybestconstant} combined with the fact that $Q(\phi,\psi)=K(\phi,\psi)/5$  immediately gives
	$$
Q(u_0,v_0)K(u_0,v_0)>\left(\frac{5}{4}\right)^4	Q(\phi,\psi)K(\phi,\psi),
	$$
which in turn shows that \eqref{gradientcondblowup} holds. This is in agreement with the results in \cite{Hayashi} where the blow up was shown if the initial energy is negative (see also \cite{corcho}). However, since $(5/4)^4>1$, our result is stronger than the one \cite{Hayashi}.
\end{obs}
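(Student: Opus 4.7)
The plan is to chase through the inequalities directly: the hypothesis $E(u_0,v_0)<0$, unpacked via the definition \eqref{conservationenergia}, reads
$$K(u_0,v_0)<2\,\mathrm{Re}(v_0,u_0^2)_{L^2}\le 2P(|u_0|,|v_0|),$$
so the first step is to bound this right-hand side by the sharp Gagliardo--Nirenberg inequality from Corollary \ref{corollarybestconstant} with $n=5$, namely $P\le C_{op}\,Q^{1/4}K^{5/4}$, where
$$C_{op}=\frac{2}{5^{5/4}\,Q(\phi,\psi)^{1/2}}.$$
Combining these two inequalities gives
$$K(u_0,v_0)<2C_{op}\,Q(u_0,v_0)^{1/4}K(u_0,v_0)^{5/4}.$$

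Next I would divide through by $K(u_0,v_0)>0$ and raise both sides to the fourth power, which yields
$$1<(2C_{op})^{4}\,Q(u_0,v_0)\,K(u_0,v_0),$$
i.e.\ $Q(u_0,v_0)K(u_0,v_0)>(2C_{op})^{-4}$. Plugging in the explicit value of $C_{op}$ gives $(2C_{op})^{-4}=5^{5}\,Q(\phi,\psi)^{2}/256$.

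To reach the form asserted in \eqref{gradientcondblowup} (with the extra factor $(5/4)^{4}$), I would invoke Lemma \ref{identitiesfunctionals} at $n=5,\ \omega=1$, which gives $K(\phi,\psi)=5\,I_{1}(\phi,\psi)$ and $Q(\phi,\psi)=I_{1}(\phi,\psi)$, hence $K(\phi,\psi)=5\,Q(\phi,\psi)$ and therefore $Q(\phi,\psi)K(\phi,\psi)=5\,Q(\phi,\psi)^{2}$. The arithmetic identity $5^{5}/256=(5/4)^{4}\cdot 5$ then rewrites the previous lower bound as
$$Q(u_0,v_0)K(u_0,v_0)>\left(\tfrac{5}{4}\right)^{4}Q(\phi,\psi)K(\phi,\psi),$$
which is exactly the claim. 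There is no real obstacle here; the remark is a one-line algebraic consequence of the sharp Gagliardo--Nirenberg inequality together with the Pohozaev-type identities of Lemma \ref{identitiesfunctionals}. The only point that needs mild care is keeping track of the exponent bookkeeping $(3/2-n/4,\,n/4)=(1/4,5/4)$ at $n=5$ and verifying that $(5/4)^{4}\cdot 5=5^{5}/4^{4}$, so that the explicit constant $C_{op}$ matches the constant $(5/4)^{4}$ on the nose.
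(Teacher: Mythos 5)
Your argument is correct and is precisely the computation the paper intends: apply Corollary \ref{corollarybestconstant} with $n=5$ to bound $2\,\mathrm{Re}(v_0,u_0^2)_{L^2}\le 2C_{op}Q^{1/4}K^{5/4}$, then use $K(\phi,\psi)=5Q(\phi,\psi)$ from Lemma \ref{identitiesfunctionals} to convert $(2C_{op})^{-4}=5^5Q(\phi,\psi)^2/256$ into $(5/4)^4Q(\phi,\psi)K(\phi,\psi)$. The constant bookkeeping checks out exactly, so nothing further is needed.
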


%%%%%%%%%%%%%%%%%%%%%%%%%%%%%%%%%%%%%%%%%%%%%%%%%%%%%%%%%%%%%%%%%%%%%%%%%%%%%%%%%%%%%%%%%%%

\section*{Acknowledgement}
N.N is supported by a partial scholarship from OAICE (University of Costa Rica). A.P. is partially supported by CNPq/Brazil grants 402849/2016-7 and 303098/2016-3.

\end{document}